%% file: ex_article.tex
\documentclass[hidelinks,onefignum,onetabnum]{siamart250211}
\usepackage{todonotes}

\input{ex_shared}

\headers{Solving LQ Stochastic Control Problems with Signatures}{A. Aqsha, P. Bank, and L. S\'{a}nchez-Betancourt}

\title{
Solving Linear-Quadratic Stochastic Control Problems with Signatures
\thanks{
\textbf{Funding: }We are grateful for support from Berlin-Oxford IRTG 2544 and the Oxford-Man Institute of Quantitative Finance. 
AA's research is supported by  the Oxford-Man Institute of Quantitative Finance through the EPSRC Centre for Doctoral Training in Mathematics of Random Systems: Analysis, Modelling and Simulation (EPSRC Grant EP/S023925/1). PB gratefully acknowledges funding by the Deutsche Forschungsgemeinschaft (DFG, German Research Foundation) – CRC/TRR 388 ``Rough Analysis, Stochastic Dynamics and Related Fields'' – Project ID 516748464.
For the purpose of open access, the authors have applied a CC BY public copyright licence to any author accepted manuscript version arising from this submission.  
\textbf{Acknowledgments: }We are grateful to seminar participants at the Oxford-Princeton and Oxford-Berlin workshop for their comments and suggestions.
}}

\author{Alif Aqsha\thanks{Oxford-Man Institute of Quantitative Finance \& Mathematical Institute, University of Oxford
  (\email{alif.aqsha@maths.ox.ac.uk}, \email{leandro.sbetancourt@maths.ox.ac.uk}).}
\and Peter Bank\thanks{Institut für Mathematik, Technische Universität Berlin 
  (\email{bank@tu-berlin.de}).}
\and Leandro S\'{a}nchez-Betancourt\footnotemark[2]}

\begin{document}

\maketitle

\begin{abstract}
We study a signature-driven numerical scheme to solve  multi-dimensional linear-quadratic (LQ) stochastic control problems. Using that linear signature functionals are dense in the natural class of admissible controls, we show that our approach turns the original LQ problem into a deterministic convex quadratic polynomial optimisation. To underpin a numerical approach based on truncated signatures, we prove that the problem's value function can be approximated by finite-dimensional polynomial approximations when the truncation levels are chosen sufficiently high. Remarkably, our numerical experiments show very decent accuracy already for small truncation levels. Key tools for our analysis are (i) the algebraic representation of controlled stochastic differential equations and the associated cost function as linear functionals of the path signatures of the driving noise, (ii) the convergence of the truncated linear functionals, and (iii) the density of signature controls. 
\end{abstract}

\begin{keywords}
Signatures, stochastic control, linear-quadratic, numerical methods, truncation
\end{keywords}

\begin{MSCcodes}
93E20, 93E03, 93E25
\end{MSCcodes}

\section{Introduction}

Stochastic optimal control is classically approached by characterising the associated value function. In sufficiently regular Markovian settings, the dynamic programming principle leads to a Hamilton--Jacobi--Bellman (HJB) partial differential equation (PDE) \cite{bensoussan2018estimation, pham2009continuous}, alternatively,  the Pontryagin maximum principle yields characterisations in terms of backward stochastic differential equations \cite{yong1999stochastic,carmona2016lectures}. These routes are powerful but quickly become challenging in high dimensions,  path-dependent settings, or with lack of regularity. Although there are results for optimal linear-quadratic (LQ) control problems driven by fractional Brownian motion \cite{kleptsyna2003lqregulator,duncan2013linear, duncan2017stochastic}, these often require  fractional integrals to obtain the optimal controls.

Over the last decade, machine learning methods have brought renewed optimism for high-dimensional control, largely by targeting the HJB equation (or related PDE objects) through function approximation \cite{sirignano2018dgm}. While these developments have produced impressive results in certain regimes, they still inherit the conceptual bottleneck of the HJB route: one approximates the  value function first, and only then recovers the optimal control (or its approximation). This motivates complementary approaches that parametrize controls directly in a rich, high-dimensional feature space and solve the resulting optimisation problem without passing through a PDE.

In this paper, we pursue such an alternative by using {signatures} of the driving noise to parametrize admissible controls. The signature of a path, introduced by Chen \cite{chen1954iterated} and developed systematically in rough path theory \cite{lyons2007differential}, is a collection of iterated integrals which provides a faithful and algebraically structured representation of the path \cite{hambly2010uniqueness, boedihardjo2016uniqueness}. In particular, linear functionals of (time-augmented) signatures form an algebra \cite{lyons2007differential} and enjoy strong approximation properties on suitable path spaces, making them a natural candidate for constructing expressive families of non-anticipative functionals. In stochastic analysis, iterated stochastic integrals underpin stochastic Taylor expansions \cite{platen1980approx,platen1982taylor,kloeden1992stochastic}, and signature theory consolidates these expansions into a unified algebraic framework in which truncation and polynomial structure can be handled systematically.

The idea of restricting to controls that are linear functionals of signatures (or expected signatures) has recently been explored in concrete stochastic optimisation tasks such as optimal execution \cite{arribas2020optimal,cartea2022double}. A key theoretical step was taken in \cite{bank2025stochastic}, which proves convergence of value functions when the objective functions are bounded in a way that permits dominated convergence arguments. While this yields a clean theory and accommodates rougher drivers, the boundedness requirement rules out the canonical LQ setting, despite LQ problems being a standard benchmark class. Nevertheless, the LQ structure suggests that a signature approach should be feasible if one can represent both the controlled state and the cost in a tractable signature-based form. Thus, our goal in this paper is to develop a rigorous signature-driven numerical scheme for multi-dimensional LQ stochastic control. 

Our starting point is that linear signature functionals are dense in the natural class of admissible controls for the LQ problem driven by Brownian noise, based on the recent density results in \cite{ceylan2025universality}. This allows us to restrict attention to signature controls without losing optimality in the limit. The second ingredient is an algebraic representation viewpoint, inspired by ideas sketched in \cite{arribas2020optimal}: by working with time-augmented signatures and their tensor algebra, we  represent the controlled state process (at least for Brownian drivers) via an infinite-dimensional linear functional acting on the signature of the driving noise, and we express the quadratic cost functional in the same algebraic language.  The last ingredient is the celebrated Fawcett formula which established a closed-form expression for the expected signature of Brownian motion \cite{fawcett2003problems, lyons2004cubature, ni2012expected}. As a result, once the control is parameterised as a linear functional of the (truncated) signature, the original stochastic control problem reduces to a deterministic convex quadratic polynomial optimisation in finitely many parameters.

To support a practical numerical method, we truncate (i) the tensor representation of the state at some level $L$, and (ii) the signature features used to parametrize controls up to level $M$. A central theoretical question is whether such finite-dimensional truncations yield value convergence as $L,M\to\infty$. Here we leverage recent growth and convergence estimates for tensor/signature expansions (in particular the growth estimates developed in \cite{jaber2024pathdependent}) to prove a rigorous consistency result: for  high truncation levels, the value function of the truncated polynomial optimisation approximates the true LQ value function. Importantly, this provides a convergence statement at the level of value functions, not merely almost-sure approximation of candidate controls.

Our numerical experiments confirm that the method we propose converges to the known LQ solution and, strikingly, achieves good accuracy for small truncation levels. Finally, the methodology itself is not tied to Brownian drivers: it extends naturally to other  signals such as fractional Brownian motion, where PDE methods are less directly applicable. Establishing a fully rigorous approximation theorem in such settings would require appropriate signature growth estimates for the chosen driver, which lies beyond the scope of the present paper. Nevertheless, our experiments indicate that the signature-driven optimisation remains effective in these non-Markovian regimes.
In summary, we make the following contributions:
\begin{enumerate}[wide, labelindent=5pt,label=(\roman*)]
  \item We develop a signature-driven numerical scheme for multi-dimensional LQ stochastic control by parametrizing controls as linear functionals of (time-augmented) path signatures.
  \item Using density of linear signature functionals in the admissible control class, we justify the signature-control  as an asymptotically non-restrictive approximation.
  \item We show that, under signature truncation, the LQ problem reduces to a deterministic convex quadratic polynomial optimisation problem.
  \item Using tensor/signature convergence and growth estimates, we prove consistency of the finite-dimensional truncations and obtain value-function approximation results as truncation levels increase.
  \item We provide numerical experiments demonstrating convergence to the known LQ solution with good accuracy at low truncation levels, and we discuss extensions beyond Brownian drivers (e.g.\ fractional Brownian motion).
\end{enumerate}

\paragraph{Related literature}
Signature-based approaches now appear in a growing range of stochastic optimization tasks. Examples include algorithmic trading  \cite{arribas2020optimal,cartea2022double}, the optimal stopping problem \cite{bayer2023optimal,bayer2025primal}  (the latter used dynamically normalised robust signatures \cite{chevyrev2022signature}), portfolio construction and trading
\cite{arribas2020sigsde,futter2023signature,futter2025kernel},  non-Markovian control problems \cite{hoglund2023neuralrde}, and the pricing and hedging of derivatives \cite{lyons2020non,jaber2024signature,jaber2025hedging,jaber2025signatureapproach}. On the theoretical side, robust signatures were introduced in \cite{chevyrev2022signature}, while global approximation and universality results for signature feature maps have been advanced in \cite{cuchiero2025global,ceylan2025universality}. The present paper fits into this broader program by establishing theoretical results about the applicability of signature methods to solve LQ stochastic control problems.

The remainder of the paper proceeds as follows. 
Section~\ref{sec:intro to signatures} reviews the signature background used throughout. Section~\ref{sec:problem formulation and results} introduces the control problem, the signature 
parametrization, and states our main theoretical results.  Section~\ref{sec:numerics} presents numerical experiments illustrating the performance of the proposed scheme.

\section{Some notations for dealing with path signatures} \label{sec:intro to signatures}
Let us briefly present some concepts and notation from Lyons's theory of path signatures; for further details, see for instance \cite{arribas2020optimal, cass2024lecturenotesroughpaths, chevyrev2026aprimer}.

\subsection{Tensor algebra notations}

Let $D\in \N$, $\Dbar = D+1$ and $\{e_1, \dots, e_{\Dbar}\}$ be the standard basis of $\R^{\Dbar}$ which induces the canonical dual basis $\{e_1^*, \dots, e_\Dbar^*\}$. Consider the algebra of tensors $$T(\R^{\Dbar}) := \left\{ (g_m)_{m=0}^\infty\, :\, g_m\in (\R^{\Dbar})^{\otimes m} \text{ and } \exists M\in \N \text{ such that } g_m = 0\, \forall m> M\right\},$$
the space of truncated tensors $$T^{ M}(\R^{\Dbar}) := \left\{ (g_m)_{m=0}^\infty\, :\, g_m\in (\R^{\Dbar})^{\otimes m} \text{ such that } g_m = 0\, \forall m> M\right\}\,,$$
and the space of extended tensors $$T^\ext(\R^{\Dbar}) := \left\{ (g_m)_{m=0}^\infty\, :\, g_m\in (\R^{\Dbar})^{\otimes m} \right\}\,,$$ where ``$\otimes$" denotes the tensor product.\footnote{ The space $(\R^{\Dbar})^{\otimes m}$ is isomorphic to $(\R^{(\Dbar)^m})$; for example, we can rearrange the matrix elements of $(\R^{\Dbar})^{\otimes 2}$ into a long vector in $(\R^{(\Dbar)^2})$.} We define $T((\R^{\Dbar})^*)$, $T^M((\R^{\Dbar})^*)$, and $T^\ext((\R^{\Dbar})^*)$ similarly by replacing $\R^\Dbar$ with its dual $(\R^\Dbar)^*$. Moreover, for any $g = (g_m)_{m=0}^\infty\in T^\ext(\R^\Dbar)$, we write its projection onto the truncated space $T^{ M}(\R^{\Dbar})$ as $g^{\leq M}$, that is
\begin{align}
    g^{\leq M} &= (\hat g_m)_{m=0}^\infty\,,\qquad
    \hat g_m =
    \begin{cases}
        g_m &\text{if } m\leq M\,,\\
        0 & \text{otherwise}\,.
    \end{cases}
\end{align}
For any $\ell \in T^\ext((\R^{\Dbar})^*)$, we define $\ell^{\leq M}$ similarly.

Throughout the paper, we represent the canonical dual basis as letters and words with blue characters. A letter represents an element from the canonical basis of $(\R^{\Dbar})^*$ and an $m$-word letter represents an element from the canonical basis of $((\R^{\Dbar})^*)^{\otimes m}$. Let us consider a set of $\Dbar$ letters $\mcW_\Dbar := \{\eone, \etwo, \dots, \eD, \eDbar\}$ with the convention that $\eDbar$ is the $\Dbar$-th letter. We associate $e_1^*$ with $\alphabet(1):=\eone$, $\alphabet(2):=e_2^*$ with $\etwo$, and so on until $e_\Dbar^*$ is represented by $\alphabet(\Dbar):=\eDbar$. With this, we identify the canonical basis of $((\R^{\Dbar})^*)^{\otimes m}$ with the set of $m$-letter words
\[V_m:=\{ \wb_1\,\dots\,\wb_m: (\wb_1, \dots, \wb_m) \in (\mcW_\Dbar)^m \}\,.\]
For any $\ell \in T^\ext((\R^{\Dbar})^*)$, $m\in \N$ and $\wb\in V_m$, we denote as $\ell^{\wb}$ the coefficient resulting from the projection of $\ell_m$ to the coordinate associated with the word $\wb$. We have that $$\ell = \sum_{m=0}^\infty \sum_{\wb\in V_m} \ell^\wb\, \wb\,.$$

We now define the Hilbert-Schmidt pairing $\langle \cdot, \cdot\rangle: T((\R^\Dbar)^*)\times T^\ext(\R^\Dbar)\to \R$ as
\begin{equation}
    \langle \ell, g\rangle := \sum_{m=0}^\infty \langle \ell_m, g_m \rangle_m, \quad \ell=(\ell_m)_{m=0}^\infty\in T((\R^\Dbar)^*),\quad \,g=(g_m)_{m=0}^\infty\in T((\R^\Dbar))\,,
\end{equation}
where $\langle \cdot, \cdot \rangle_m: ((\R^\Dbar)^*)^{\otimes m}\times (\R^\Dbar)^{\otimes m} \to \R$ is the natural dual pairing induced from the dot product in $(\R^\Dbar)^{\otimes m}$. The above pairing is a finite summation as $\ell$ has only finitely-many nonzero components. For $\ell\in T^\ext((\R^\Dbar)^*)$, we  extend the  pairing to
\begin{equation}
    \langle \ell, g\rangle := \lim_{M\to \infty}\sum_{m=0}^M \langle \ell_m, g_m \rangle_m
\end{equation}
whenever the expression converges.
Next, we  define the seminorm of $\ell\in T^\ext((\R^{\Dbar})^*)$ implied by the tensor $g=(g_m)_{m=1}^\infty\in T^\ext(\R^\Dbar)$ as
\begin{equation}
\label{eq: extended_tensor_seminorm}
    \|\ell\|_{T^\ext((\R^\Dbar)^*), g} := \sum_{m=0}^\infty \left| \langle \ell_m, g_m \rangle_m \right| = \sum_{m=0}^\infty \left| \sum_{\wb \in V_m} \ell^\wb\, g^\wb \right|\,,
\end{equation}
where $g^\wb := \langle \wb, g \rangle $.

\section{Problem formulation}\label{sec:problem formulation and results}

In this section, our aim is to express, algebraically, and as a (potentially infinite) linear combination of coefficients involving the driving noise signatures: (i) the controlled multidimensional state process and (ii) the linear-quadratic cost functions. Afterwards, we propose a truncation method to approximate the controlled system and the associated cost with a finite sum. We will show that optimising for these finite-dimensional approximations is a good substitute to the actual infinite-dimensional problem.

\subsection{Class of admissible tensor functionals}

Let $T>0$ and $(\Omega, \F, \mbF = (\mcF)_{t\in [0,T]}, \Pb)$ be a filtered probability space satisfying the usual conditions. Let $W = (W^{(1)}, \dots, W^{(D)})$ be a $D$-dimensional continuous $(\mbF, \Pb)$-progressively measurable process and $\Wsig_t$ be its time-extended signature up until time $t$, that is 
{\scriptsize
\begin{align}
 \Wsig_t &:= \left(\rule{0cm}{1.2cm}\right.
     1, 
     \begin{pmatrix}
         t\\ W_t^{(1)} \\ \vdots \\ W_t^{(D)}
     \end{pmatrix},
     \begin{pmatrix}
         \frac{t^2}{2!} & \int_0^t s\, \circ \d W_s^{(1)} & \cdots &  \int_0^t s\, \circ \d W_s^{(D)}  \\
         \int_0^t W_s^{(1)}\, \d s & \frac{\left(W_t^{(1)}\right)^2}{2!} & \cdots &  \int_0^t W_s^{(1)}\, \circ \d W_s^{(D)} \\
         \vdots & \vdots & \ddots & \vdots\\
         \int_0^t W_s^{(D)}\, \d s & \int_0^t W_s^{(D)}\, \circ \d W_s^{(1)} & \cdots &  \frac{\left(W_t^{(D)}\right)^2}{2!}
     \end{pmatrix},
     \underbrace{\begin{pmatrix}
         \cdots
     \end{pmatrix}}_{\substack{(D+1)^3\\ \text{ terms}}},
     \cdots\left.\rule{0cm}{1.2cm}\right)
\end{align}
}%
where ``$\int \circ$" denotes the Stratonovich integration. The signature $\Wsig_t$ lies on the space of extended tensors $T^\ext(\R^\Dbar)$ with the following coordinates: for any word $\wb = \alphabet(n_1) \cdots \alphabet(n_m) $ with $(n_1, \dots, n_m)\in \{1, \dots, \Dbar\}^m$, we have
\begin{equation}
    \Wsig_t^\wb = \int_0^t \int_0^{t_{m-1}} \cdots \int_0^{t_{1}} \d W_{t_1}^{(n_1 -1)} \circ \cdots \circ \d W_{t_m}^{(n_m -1)}\,,
\end{equation}
with the convention $\d W_t^{(0)} := \d t$. From rough path theory, the definition of signature above is well-defined for any continuous process, e.g., Brownian motion and fractional Brownian motion.
We then define the following subset of admissible tensors
\begin{align}
    \tensorLspace := \left\{ \ell \in T^\ext((\R^{\Dbar})^*):\, \Pb\left(\|\ell\|_{T^\ext((\R^{\Dbar})^*), \Wsig_t} < \infty \quad \forall t\in \mfT\right) = 1 \right\}\,.
\end{align}
It follows that $T((\R^{\Dbar})^*)\subset \tensorLspace$ as the sum in \eqref{eq: extended_tensor_seminorm} turns into a finite sum. Moreover, if $\ell\in \tensorLspace$, then the Hilbert-Schmidt pairing $\big(\langle \ell, \Wsig_t \rangle\big)_{t\in [0,T]}$,
\[ \langle \ell, \Wsig_t \rangle = \sum_{m=0}^\infty \sum_{\wb \in V_m} \ell^{\wb}\, \Wsig_t^\wb\,, \]
is a well-defined and progressively-measurable stochastic process.

It is well-known that the space of linear functionals acting on signatures forms an algebra under the so-called \textit{shuffle} product ``$\shpr$'' (refer to e.g. \cite{gaines1994thealgebra}; see \cite{jaber2024pathdependent} for the extended version). In particular, from \cite{jaber2024pathdependent} we have that if $\ell_1, \ell_2 \in \tensorLspace$, then $\ell_1 \shpr \ell_2\in \tensorLspace$ and
    \begin{align}
        \langle \ell_1, \Wsig_t \rangle\, \langle \ell_2\, \Wsig_t \rangle = \langle \ell_1 \shpr \ell_2, \Wsig_t\rangle\,.
    \end{align}

\subsection{Algebraic expression of a controlled multidimensional linear SDE}

We take the state $X$ to be $N$-dimensional and the control $\ct$ to be $K$-dimensional. Suppose that $\ct$ is a $\mbF$-progressively measurable process such that the system of stochastic differential equations (SDEs),
\begin{align}
    \d \st_t^{\ct, (n)} &= \left[ b_0^{(n)} + \sum_{k=1}^K b_1^{(n, k)}\, \ct_t^{(k)} + \sum_{n'=1}^N b_2^{(n, n')} \, \st_t^{\ct,(n')} \right]\,\d t\\
    &\qquad\qquad  + \sum_{d=1}^D \left[ \sigma_0^{(n, d)} + \sum_{n'=1}^N \sigma_2^{(n, d, n')}\, \st_t^{\ct,(n')} \right] \circ \d W_t^{(d)}\,, \qquad \st_0^{\ct, (n)} = \st_0^{(n)}\,,\label{eq:multidim_system}
\end{align}
admits a strong solution. For simplicity, all coefficients ($b_0^{(n, k)}$, $b_1^{(n, k)}$, $b_2^{(n, n')}$, $\sigma_0^{(n, d)}$, $\sigma_2^{(n, d, n')}$ for $n,n'=1,\dots, N$, $k=1,\dots, K$, $d=1, \dots D$) are constants.\footnote{This can be relaxed into deterministic polynomial-in-time coefficients}

Throughout this paper, we use parentheses in the superscripts to refer to a particular element of a vector/matrix/tensor. For example, $\st_t^{\ct, (n)}$ denotes the $n$-th element of the $N$-dimensional vector $\st_t^{\ct}$ and $\sigma_2^{(n,d,n')}$ denote the element of $\sigma_2$ in coordinate $(n, d, n')$. We include $\ct$ in the superscript of $X$ to draw attention to the dependence on the control.

We are particularly interested in controls $\ct$ which are given by linear functions of the signature:
\begin{align}
    \mcA_{\Sig} := \bigg\{ \ct: [0,T]\times \Omega \to \R^K \;\bigg|\; & \ct^{(k)}_t = \la \ctt^{(k)}, \Wsig_t \ra, \; k=1,\dots,K,\; t \in [0,T], \\&\text{ for some } \ctt = (\ctt^{(1)},  \dots, \ctt^{(K)}) \in (T((\R^{\Dbar})^*))^K \bigg\}\, .
\end{align}
As we will see, these signature controls form a sufficiently rich class for linear quadratic optimization problems with the clear advantage that, conveniently for numerically purposes, it is parametrized by elements $\ctt$ of the tensor algebra $(T((\R^{\Dbar})^*))^K$. 

The main result of the present section will be that for signature controls $\ct$ also their induced state dynamics $\st^\ct$ of~\eqref{eq:multidim_system} are of signature form, albeit with suitable elements $\stt^\ctt=(\stt^{\ctt,(1)},\dots,\stt^{\ctt,({N})})$ from the extended tensor algebra $(T^\ext((\R^{\Dbar})^*))^N$ which can be computed from $\ctt$. So, let us make the ansatz 
$$
\st_t^{\ct, (n)} = \langle \stt^{\ctt,(n)}, \Wsig_t  \rangle \text{ for some } \stt^{\ctt,(n)}\in T^\ext((\R^{\Dbar})^*), \quad n=1,\dots,N.
$$ 
Granted this ansatz works, we exploit the linearity of our system dynamics to express both sides of~\eqref{eq:multidim_system} using tensors
\begin{equation}
\label{eq:tensor_equation_1}
    \langle \stt^{\ctt,(n)}, \Wsig_t \rangle = \la \mathtt{p}^{\ctt,(n)} + \sum_{n'=1}^N \stt^{\ctt,(n')} \otimes \mathtt{q}^{(n,n')} , \Wsig_t \ra\,, \quad n=1,\dots,N,
\end{equation}
with
\begin{equation}
\label{eq:tensor_equation_2}
    \begin{split}
         \mathtt{p}^{\ctt,(n)} &:= \st_0^{(n)}\, {\emptyword} + \left(b_0^{(n)}{\emptyword}+\sum_{k=1}^K b_1^{(n,k)}\, \ctt^{(k)}\right)\otimes \eone + \sum_{d=1}^D \sigma_0^{(n,d)}\, \alphabet(d)\,,\\
         \mathtt{q}^{(n,n')} &:= b_2^{(n,n')}\, \eone + \sum_{d=1}^D \sigma_2^{(n,d,n')}\, \alphabet(d)\,.
    \end{split}
\end{equation}
We are thus led to consider the system of tensor equations
\begin{equation}
\label{eq:tensor_equation_3}
    \stt^{\ctt,(n)} =\mathtt{p}^{\ctt,(n)} + \sum_{n'=1}^N \stt^{\ctt,(n')} \otimes \mathtt{q}^{(n,n')}\,, \quad n=1,\dots,N.
\end{equation}
Our first result deals with the existence and uniqueness of the solution to this system.
\begin{proposition}
\label{prop:tensor_equation_sol}
    For any $\mathtt{p}^{(n)}, \mathtt{q}^{(n,n')} \in T^\ext((\R^\Dbar)^*)$, $n,n'=1,\dots,N$, with $(q^{(n,n')})^\emptyword=0$, there exists a unique $\stt=(\stt^{(1)},\cdots, \stt^{(N)}) \in T^\ext((\R^\Dbar)^*)^N$ such that
    \begin{equation}
    \label{eq: tensor_equation_shuffle}
        \stt^{(n)} =\mathtt{p}^{(n)} + \sum_{n'=1}^N \stt^{(n')} \otimes \mathtt{q}^{(n,n')}\,, \quad n=1,\dots,N.
    \end{equation}
\end{proposition}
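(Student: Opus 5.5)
Since every $\mathtt{q}^{(n,n')}$ has vanishing empty-word coefficient, the right-hand side of \eqref{eq: tensor_equation_shuffle} is a contraction in the degree-graded sense, and the solution can be built recursively level by level. For $\ell\in T^\ext((\R^\Dbar)^*)$ write $\ell_m$ for its degree-$m$ component. Because $(\mathtt{q}^{(n,n')})_0=0$, the degree-$m$ component of $\stt^{(n')}\otimes \mathtt{q}^{(n,n')}$ is
\begin{equation*}
\bigl(\stt^{(n')}\otimes \mathtt{q}^{(n,n')}\bigr)_m=\sum_{j=0}^{m-1}\stt^{(n')}_j\otimes \mathtt{q}^{(n,n')}_{m-j},
\end{equation*}
a finite sum involving only components of $\stt$ of degree strictly less than $m$. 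Thus the tensor product is well defined on $T^\ext$ (each graded piece is a finite sum), and the system \eqref{eq: tensor_equation_shuffle} is equivalent to the family of graded equations
\begin{equation*}
\stt^{(n)}_m=\mathtt{p}^{(n)}_m+\sum_{n'=1}^N\sum_{j=0}^{m-1}\stt^{(n')}_j\otimes \mathtt{q}^{(n,n')}_{m-j},\qquad m\ge 0,\ n=1,\dots,N.
\end{equation*}

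\textbf{Existence.} Set $\stt^{(n)}_0:=\mathtt{p}^{(n)}_0$; the sum is empty for $m=0$. Given $\stt^{(n')}_j$ for all $j<m$ and all $n'$, define $\stt^{(n)}_m$ by the displayed formula. By induction on $m$ this produces a well-defined element $\stt=(\stt^{(1)},\dots,\stt^{(N)})\in T^\ext((\R^\Dbar)^*)^N$. By construction every graded component of \eqref{eq: tensor_equation_shuffle} holds, so the system holds.

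\textbf{Uniqueness.} Suppose $\stt$ and $\tilde\stt$ both solve the system, and set $\delta^{(n)}:=\stt^{(n)}-\tilde\stt^{(n)}$. Subtracting the two graded equations gives
\begin{equation*}
\delta^{(n)}_m=\sum_{n'}\sum_{j<m}\delta^{(n')}_j\otimes \mathtt{q}^{(n,n')}_{m-j}.
\end{equation*}
For $m=0$ the right-hand side is empty, so $\delta_0=0$. Strong induction on $m$ then yields $\delta_m=0$ for every $m$, hence $\stt=\tilde\stt$.

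The only point that needs any care is checking that $\otimes$ extends to $T^\ext$ with graded components given by finite convolution sums, and that the hypothesis $(\mathtt{q}^{(n,n')})^\emptyword=0$ is exactly what makes the recursion strictly triangular. Without that hypothesis the degree-$m$ equation would contain $\stt_m$ on both sides, and one would have to invert $I-Q_0$. A closed form is also available but not needed: in matrix notation $\stt=\mathtt{p}\otimes\sum_{k\ge0}\mathtt{Q}^{\otimes k}$, a Neumann series whose graded pieces are finite sums because $\mathtt{Q}$ has no degree-zero part.
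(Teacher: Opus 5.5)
Your proof is correct and follows essentially the paper's argument: both solve the system level by level, using $(\mathtt{q}^{(n,n')})^\emptyword=0$ so that the degree-$m$ equation involves only lower-degree components of $\stt$, and both get uniqueness from the same triangular recursion. The differences are cosmetic: you work with graded components $\stt_m$ rather than word coordinates $(\stt^{(n)})^{\wb\jb}$, and you spell out the uniqueness step explicitly via the difference $\delta$.
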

\begin{proof}
    The problem is equivalent to solving the system of equations
    \begin{equation}
        \big(\stt^{(n)}\big)^\wb = \big(\mathtt{p}^{(n)}\big)^\wb + \sum_{n'=1}^N \big(\stt^{(n')} \otimes \mathtt{q}^{(n,n')}\big)^\wb
    \end{equation}
    parametrized by $n=1,\dots,N$ and arbitrary words $\wb$. For $\wb =\emptyword$, we have $(\stt^{(n')}\otimes \mathtt{q}^{(n,n')})^{\emptyword} = 0$ as $(\mathtt{q}^{(n,n')})^\emptyword=0$, implying that
    $$\big(\stt^{(n)}\big)^\emptyword = \big(\mathtt{p}^{(n)}\big)^\emptyword\,$$ is the only choice.
    Next, assume that we have solved for $\big(\stt^{(n)}\big)^\wb$ for all $n=1,\dots, N$ and any $|\wb|\leq M$. For $\jb\in \{\eone, \etwo, \cdots, \eDbar\}$, again we use the fact that $q^{(n,n')})^{\emptyword} = 0$ and obtain
    \begin{align}
        \big(\stt^{(n)}\big)^{\wb \jb} &= \big(\mathtt{p}^{(n)}\big)^{\wb \jb} + \sum_{n'=1}^N \sum_{\substack{\wb_1\wb_2 = \wb\jb\\ |\wb_1| \leq M}} \big(\stt^{(n')}\big)^{\wb_1}\, \big(\mathtt{q}^{(n,n')}\big)^{\wb_2}\,.
    \end{align}
    Thus, we would have solved for $\big(\stt^{(n)}\big)^{\vb}$ for all $n=1,\dots,N$ and word $\vb$ of length $(N+1)$. By induction, we find a solution $\stt = (\stt^{(1)},\dots,\stt^{(N)})$ that satisfies Equation \eqref{eq: tensor_equation_shuffle}. The uniqueness follows directly from the algorithm for computing each element of $\stt^{(n)}$.
\end{proof}

From Proposition \ref{prop:tensor_equation_sol}, the tensor equation~\eqref{eq:tensor_equation_3} has a unique solution $\stt^\ctt$ in $(T^\ext((\R^\Dbar)^*))^N$. However in general, even if the extended tensor $\stt^{\ctt, (n)}$ belongs in the space $\tensorLspace$ (which is not yet established at this point), it is not clear that the random process $\langle \stt^{\ctt, (n)}, \Wsig_\cdot \rangle$ replicates the original controlled state $\st^{\ct, (n)}$. Moreover, it remains unknown that the pairing is integrable (either with respect to $\d t$, $\d \Pb$, or both). Even if it is integrable, it does not follow automatically that one can move the expectation inside the pairing, i.e., $\E [\langle \stt^{\ctt, (n)}, \Wsig_t\rangle ] = \langle \stt^{\ctt, (n)}, \E [\Wsig_t] \rangle$. 

All of the above complications are resolved when the driving noise $W$ is Brownian. The foundational work by \cite{jaber2024pathdependent} provides a growth condition for the entries of $\ell$ such that random processes induced by the tensor and the Brownian signatures are integrable in some sense.
\begin{lemma}
\label{lemma: exponential_growth_level_tensor}
    Define the set
    \begin{align}
        \tensorExpspace:=\left\{ \ell \in T^\ext((\R^\Dbar)^*)\,: \, \exists\, C\in (0,\infty) \text{ such that } \sup_{m\in \N} \sup_{\vb\in V_m} \frac{|\ell^{\vb}|}{C^{m}} <\infty\right\}\,.
    \end{align}
    The following statements hold. 
    \begin{enumerate}[wide, labelwidth=!, labelindent=10pt]
        \item \label{lemma: exponential_growth_level_tensor_1} $\tensorExpspace$ is a linear space that is closed under tensor and shuffle product, i.e. for any $\ell_1, \ell_2\in \tensorExpspace$, $a, b\in \R$, we have $a\,\ell_1 + b\, \ell_2\,, \ell_1\otimes \ell_2\,, \ell_1\shpr \ell_2 \in \tensorExpspace$.
        \item If the driving noise $W$ is Brownian, for any $\ell, \ell_1, \ell_2 \in \tensorExpspace$, we have
        \begin{enumerate}
            \item \label{lemma: exponential_growth_level_tensor_2} $\tensorExpspace \subset \tensorLspace$  and $\displaystyle \E \bigg[ \sup_{t\in [0,T]} \big| \langle \ell, \Wsig_t \rangle|^2\bigg]  < \infty$,
            \item \label{lemma: tensor functional convergence} $\displaystyle \lim_{\lev\to \infty} \E \bigg[ \sup_{t\in [0,T]} \big| \langle \ell, \Wsig_t \rangle - \langle \ell^{\leq \lev}, \Wsig_t\rangle \big|^2\bigg] = 0\,,$
            \item  \label{lemma: tensor exchange operation} $\displaystyle \E \left[ \langle \ell, \Wsig_T\rangle \right] = \langle \ell, \E [ \Wsig_T] \rangle\,,$
            \item \label{lemma: tensor integral exchange operation} $\displaystyle \E \left[ \int_0^T \langle \ell, \Wsig_t\rangle\, \d t \right] = \langle \ell\otimes \eone, \E [ \Wsig_T] \rangle\,,$
            \item \label{lemma: shpr approx finite} $\displaystyle \lim_{\lev_1, \lev_2 \to \infty} \E \left[ \sup_{t\in [0,T]} \big| \langle \ell_1 \shpr \ell_2 - \ell_1^{\leq \lev_1} \shpr \ell_2^{\leq \lev_2}, \Wsig_t \rangle \big| \right] = 0,$
            \item \label{lemma: mean shpr approx finite} $\displaystyle \E [ \langle \ell_1, \Wsig_t \rangle\, \langle \ell_2, \Wsig_t \rangle] = \lim_{\lev_1, \lev_2 \to \infty} \langle \ell_1^{\leq \lev_1} \shpr \ell_2^{\leq \lev_2}, \E [\Wsig_t]\rangle\,,$ and
            \item \label{lemma: integral shpr approx finite} for all $m\in \N$,
            \begin{align}
                &\E \left[ \int_0^T \frac{t^m}{m!}\, \langle \ell_1, \Wsig_t \rangle\, \langle \ell_2, \Wsig_t \rangle\, \d t\right]\\
                &\qquad = \lim_{\lev_1, \lev_2\to \infty} \la (\eone^{\otimes m} \shpr \ell_1^{\leq \lev_1} \shpr \ell_2^{\leq \lev_2})\otimes \eone, \E [\Wsig_T]\ra\,.
            \end{align}
        \end{enumerate}
    \end{enumerate}
\end{lemma}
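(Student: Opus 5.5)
The plan is to treat part~\ref{lemma: exponential_growth_level_tensor_1} by elementary combinatorial counting, and then to reduce every Brownian statement to one quantitative estimate. That estimate is a factorial decay bound for the $L^2$ norm of the level-$m$ signature components of the time-extended Brownian motion.

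\textbf{Part 1.} Linearity is immediate. If $|\ell_1^\vb|\le A_1C_1^{m}$ and $|\ell_2^\vb|\le A_2C_2^{m}$ for $\vb\in V_m$, consider the tensor product. For a word of length $m$,
\[(\ell_1\otimes\ell_2)^\vb=\sum_{\vb_1\vb_2=\vb}\ell_1^{\vb_1}\ell_2^{\vb_2}\]
has $m+1$ terms, each bounded by $A_1A_2\max(C_1,C_2)^m$. Since $m+1\le 2^m$, we get the growth constant $2\max(C_1,C_2)$. For the shuffle product, the coefficient of $\vb$ is a sum over splittings of the positions of $\vb$ into two complementary subsequences, so there are at most $2^m$ terms. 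Hence the growth constant $2\max(C_1,C_2)$ again suffices.

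\textbf{Part 2, key estimate.} For Brownian $W$, I would prove that for each $p\ge 2$ there is a $K=K(p,T,D)$ with
\[\Big\|\sup_{t\le T}|\Wsig^\vb_t|\Big\|_{L^p}\le \frac{K^m}{\sqrt{m!}}\,\Big(\text{or } \frac{K^m}{\Gamma(m/2+1)}\Big)\qquad\text{for all }\vb\in V_m.\]
The approach is to convert the Stratonovich iterated integrals to It\^o form; each correction term merges two adjacent letters into a $\d t$. Then apply BDG recursively along the word. A $\d t$ letter gains a factor $t/k$ and a $\d W$ letter gains a factor $\sqrt{t/k}$, with $K$ absorbing the BDG constants. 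This is exactly the estimate underlying \cite{jaber2024pathdependent}, so I would cite it rather than redo it. This step is the main obstacle: the constants must not depend on $m$ beyond geometric growth, and the Stratonovich-to-It\^o conversion yields at most $2^m$ terms, which is also geometric.

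\textbf{Part 2, consequences.} Given the estimate, $\sum_m\sum_{\vb\in V_m}|\ell^\vb|\,\|\sup_t|\Wsig_t^\vb|\|_{L^2}\le \sum_m A(C\Dbar K)^m/\sqrt{m!}<\infty$. This absolute summability gives each statement in turn.
\begin{itemize}
\item For (a), Minkowski's inequality gives $\sup_t|\langle\ell,\Wsig_t\rangle|\in L^2$, and almost-sure absolute convergence gives $\ell\in\tensorLspace$.
\item For (b), the error is bounded by the tail $\sum_{m>\lev}$ of the convergent series above, so it tends to zero.
\item For (c), Fubini applies, or equivalently dominated convergence with the $L^1$ dominant $\sum|\ell^\vb||\Wsig^\vb_T|$.
\item For (d), Fubini gives $\int_0^T\Wsig^\vb_t\,\d t=\Wsig^{\vb\eone}_T$, and $\ell\otimes\eone\in\tensorExpspace$ by Part 1.
\item For (e), write $\ell_1\shpr\ell_2-\ell_1^{\le\lev_1}\shpr\ell_2^{\le\lev_2}=(\ell_1-\ell_1^{\le\lev_1})\shpr\ell_2+\ell_1^{\le\lev_1}\shpr(\ell_2-\ell_2^{\le\lev_2})$. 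Each shuffle pairing equals a product of pairings, by the shuffle identity applied to elements of $\tensorExpspace\subset\tensorLspace$. Cauchy--Schwarz combined with (a) and (b) then finishes, since $\|\ell_1^{\le\lev_1}\|$-type bounds are uniform in $\lev_1$.
\item For (f), combine (e) with (c) applied to the finite tensor $\ell_1^{\le\lev_1}\shpr\ell_2^{\le\lev_2}$. The product on the left is integrable by (a) and Cauchy--Schwarz.
\item For (g), use $t^m/m!=\langle\eone^{\otimes m},\Wsig_t\rangle$, so that the integrand is the pairing with $\eone^{\otimes m}\shpr\ell_1\shpr\ell_2$. The truncation error is controlled as in (e), uniformly in $t$, and the finite-tensor case follows from (d).
\end{itemize}
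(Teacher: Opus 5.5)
Your proof is correct. For (c)--(g) it follows the paper's argument: the splitting $(\ell_1-\ell_1^{\le \lev_1})\shpr\ell_2+\ell_1^{\le \lev_1}\shpr(\ell_2-\ell_2^{\le \lev_2})$, the extended shuffle identity, Cauchy--Schwarz with bounds uniform in $\lev_1$, and $t^m/m!=\langle \eone^{\otimes m},\Wsig_t\rangle$.

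The real difference is in Part~1, (a) and (b). The paper does not prove these directly. It checks that $\tensorExpspace$ coincides with the exponentially dominated tensors of \cite{jaber2024pathdependent}, i.e.\ those with $|\ell^{\vb}|\le \hat C^{|\vb|+1}$ for some $\hat C$. It then imports all three statements from Proposition~3.12 there, and obtains (c) and (d) from (b) by an $L^1$ truncation estimate.

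You instead prove closure under $\otimes$ and $\shpr$ by counting splittings, with explicit constant $2\max(C_1,C_2)$. You then reduce (a) and (b) to a single word-wise moment bound, after which everything is Minkowski's inequality applied to an absolutely summable series.

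The paper's route is shorter and rests on one black box. Yours isolates the only genuinely analytic input and shows why exponential coefficient growth is the right class: it is exactly what the superexponential decay of Brownian signature moments can absorb. Your route also gives absolute convergence over words, which is stronger than the level-wise absolute convergence in the definition of $\tensorLspace$, so (c) and (d) become plain Fubini.

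If you prove the moment bound rather than cite it, note that the summability argument only needs decay faster than every geometric rate. To get your stated $1/\sqrt{m!}$ rate, however, you should pass through $\d t$ letters with Minkowski's integral inequality. Cauchy--Schwarz in time loses a factor that grows with the current level at each such step.
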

\begin{proof}
    First, we will prove that the space $\tensorExpspace$ is the same as the space of \textit{exponentially-dominated} tensors $\mcA_{\exp}$ introduced in \cite{jaber2024pathdependent}. Let $\ell\in \tensorExpspace$ with the corresponding constant $C\in (0,\infty)$. Define $\hat C := \max \left\{ C, \sup_{m\in \N} \sup_{\vb\in V_m} \frac{|\ell^{\vb}|}{C^{m}} \right\}$ and $\hat{\mathtt{C}}:=\hat C e^{\shpr \hat C \sum_{\jb \in \mcW_\Dbar} \jb} $. From Proposition 2.3 in  \cite{jaber2024pathdependent} we have that
    \begin{align}
        \hat{\mathtt{C}} = \hat C\, \sum_{m=0}^\infty \left(\hat C \sum_{\jb \in \mcW_\Dbar} \jb\right)^{\otimes m} = \sum_{m=0}^\infty \hat C^{m+1} \sum_{\vb \in V_m} \vb\,.
    \end{align}
    Thus, for any $m\in \N$ and $\vb\in V_m$, $|\ell^{\vb}| \leq \hat C\, C^m \leq \hat C^{m+1} = \hat{\mathtt{C}}^\vb\,.$
    Conversely, if there exists $\tilde C \in (0,\infty)$ such that $|\ell^{\vb}| \leq \left( \tilde C e^{\shpr \tilde C \sum_{\jb \in \mcW_\Dbar} \jb} \right)^\vb$ for any word $\vb$, then $\sup_{m\in \N} \sup_{\vb\in V_m} \frac{|\ell^{\vb}|}{\hat C^{m}} \leq \hat C < \infty$. We conclude that statements \ref{lemma: exponential_growth_level_tensor_1}, \ref{lemma: exponential_growth_level_tensor_2}, and \ref{lemma: tensor functional convergence} directly follow from Proposition 3.12 in \cite{jaber2024pathdependent}.
    For \ref{lemma: tensor exchange operation}, we have
    \begin{align}
        \Big|\E \big[ \langle \ell, \Wsig_T\rangle \big] - \langle \ell, \E [ \Wsig_T] \rangle \Big| &=\lim_{m \to \infty} \Big|\E \big[ \langle \ell, \Wsig_T\rangle \big] - \langle \ell^{\leq M}, \E [ \Wsig_T] \rangle \Big|\\
        &= \lim_{m \to \infty} \Big|\E \big[ \langle \ell, \Wsig_T\rangle - \langle \ell^{\leq M}, \Wsig_T\rangle \big] \Big|\\
        &\leq \lim_{m \to \infty} \E \big[ \big|\langle \ell, \Wsig_T\rangle - \langle \ell^{\leq M}, \Wsig_T\rangle \big| \big] = 0\,.
    \end{align}
    Statement \ref{lemma: tensor integral exchange operation} follows similarly. Next, observe that
    \begin{align}
       \ell_1 \shpr \ell_2 - \ell_1^{\leq \lev_1} \shpr \ell_2^{\leq \lev_2} &= \big(\ell_1 - \ell_1^{\leq \lev_1}\big) \shpr\ell_2 + \ell_1^{\leq \lev_1}\shpr \big( \ell_2  -\ell_2^{\leq \lev_2} \big)\,.
    \end{align}
    Thus, it follows that 
    \begin{align}
        \E \bigg[ &\sup_{t\in [0,T]} \big| \langle \ell_1 \shpr \ell_2 - \ell_1^{\leq \lev_1} \shpr \ell_2^{\leq \lev_2}, \Wsig_t \rangle \big| \bigg]\\
        &\leq
        \E \bigg[ \sup_{t\in [0,T]} \big|\langle (\ell_1 - \ell_1^{\leq \lev_1}) \shpr \ell_2, \Wsig_t \rangle \big| \bigg] +\E \bigg[ \sup_{t\in [0,T]} \big| \langle \ell_1^{\leq \lev_1}\shpr \big( \ell_2  -\ell_2^{\leq \lev_2} \big), \Wsig_t \rangle\big| \bigg]\,.\label{eq: inequality approx finite tensors}
    \end{align}
    By applying the extended shuffle property (Proposition 3.1 in \cite{jaber2024pathdependent}) and Cauchy-Schwarz inequality we have
        \begin{align}
            &\E \bigg[ \sup_{t\in [0,T]} \big|\langle (\ell_1 - \ell_1^{\leq \lev_1}) \shpr (\ell_2 - \ell_2^{\leq \lev_2}), \Wsig_t \rangle \big|\bigg]\\
              &\qquad  =\E \bigg[ \sup_{t\in [0,T]} \big|\langle \ell_1 - \ell_1^{\leq \lev_1}, \Wsig_t \rangle \langle \ell_2, \Wsig_t \rangle \big|\bigg]\\
            &\qquad  \leq \E \bigg[ \sup_{t\in [0,T]} \langle \ell_1 - \ell_1^{\leq \lev_1}, \Wsig_t \rangle^2\bigg]^{1/2}\, \E \bigg[\sup_{t\in [0,T]} \langle \ell_2, \Wsig_t \rangle^2\bigg]^{1/2} \xrightarrow{\lev_1 \to \infty}  0\,.
        \end{align}
    Moreover, observe that $\sup_{\lev_1 \in \N} \E \bigg[\sup_{t\in [0,T]} \langle \ell_1^{\leq \lev_1}, \Wsig_t \rangle^2\bigg] < \infty$. As such, by applying the extended shuffle product and Cauchy-Schwarz again, the second term in the right-hand side of \eqref{eq: inequality approx finite tensors} also converges to zero, proving \ref{lemma: shpr approx finite}.  Statement \ref{lemma: mean shpr approx finite} follows from the extended shuffle product and \ref{lemma: shpr approx finite}. Statement \ref{lemma: integral shpr approx finite} follows from the fact that $t^m/m! = \langle \eone^{\otimes m}, \Wsig_t\rangle$ and from applying the extended shuffle product, statement \ref{lemma: tensor integral exchange operation}, and statement \ref{lemma: shpr approx finite}.
\end{proof}
The following lemma shows conditions under which the solution $\stt^\ctt$ of~\eqref{eq:tensor_equation_3} belongs to the space $\tensorExpspace$.
\begin{lemma}
\label{lemma: convergence_tensor_multidim_linear}
    For $\ctt\in \big(T((\R^{\Dbar})^*)\big)^K$, the solution $\stt^{\ctt,(n)}$, $n=1,\dots, N$, of~\eqref{eq:tensor_equation_3} is an element of the space $\tensorExpspace$.
\end{lemma}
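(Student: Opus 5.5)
We prove the lemma directly from the coordinatewise recursion in the proof of Proposition~\ref{prop:tensor_equation_sol}, showing by induction on word length that the coefficients of $\stt^{\ctt,(n)}$ grow at most geometrically. The structure makes this natural. Each $\mathtt{q}^{(n,n')}$ is supported on words of length exactly one. Since $\ctt\in (T((\R^\Dbar)^*))^K$, there is an $M_0\in\N$ such that every $\ctt^{(k)}$ vanishes above level $M_0$. Hence each $\mathtt{p}^{\ctt,(n)}$ is supported on words of length at most $M_0+1$.

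Since $\mathtt{q}^{(n,n')}$ lives only on letters, the recursion of Proposition~\ref{prop:tensor_equation_sol} simplifies. For any word $\wb$ and letter $\jb$,
\begin{equation}
  \big(\stt^{\ctt,(n)}\big)^{\wb\jb} = \big(\mathtt{p}^{\ctt,(n)}\big)^{\wb\jb} + \sum_{n'=1}^N \big(\stt^{\ctt,(n')}\big)^{\wb}\, \big(\mathtt{q}^{(n,n')}\big)^{\jb}\,.
\end{equation}
Set $Q:=\max_{n,n',\jb}|(\mathtt{q}^{(n,n')})^{\jb}|$, $P:=\max_{n,\vb}|(\mathtt{p}^{\ctt,(n)})^{\vb}|$ (a finite maximum, since $\mathtt{p}^{\ctt,(n)}$ has finite support), and $a_m:=\max_{n}\max_{\vb\in V_m}|(\stt^{\ctt,(n)})^{\vb}|$. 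The recursion then gives
\begin{equation}
  a_0\le P, \qquad a_{m+1}\le P + NQ\,a_m \quad \text{for all } m\ge 0\,.
\end{equation}
For $m+1>M_0+1$ the term $P$ drops out, so $a_{m+1}\le NQ\,a_m$ there. Either way, iterating gives $a_m\le P\sum_{i=0}^m (NQ)^i\le P(m+1)\max(1,NQ)^m$.

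It remains to absorb the polynomial factor into a geometric one. Take $C:=2\max(1,NQ)$. Since $m+1\le 2^m$, we get $a_m/C^m\le P(m+1)2^{-m}\le P$, so $\sup_m\sup_{\vb\in V_m}|(\stt^{\ctt,(n)})^{\vb}|/C^m<\infty$. This is exactly membership in $\tensorExpspace$.

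The argument has no real obstacle. The only point needing care is that the sum over $\wb_1\wb_2=\wb\jb$ in the proof of Proposition~\ref{prop:tensor_equation_sol} collapses to the single split $\wb_2=\jb$. This holds because $\mathtt{q}^{(n,n')}$ is homogeneous of degree one: its empty-word coefficient vanishes and it has nothing above level one. Without this collapse the count of splittings would grow with $m$, but here it does not arise. As an alternative route, one could instead use closure of $\tensorExpspace$ under sums and tensor products from Lemma~\ref{lemma: exponential_growth_level_tensor}, together with a Neumann-series representation of the solution. The direct bound above is simpler, though.
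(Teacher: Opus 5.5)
Your proof is correct and takes essentially the same route as the paper. Both arguments use the collapsed recursion $(\stt^{\ctt,(n)})^{\wb\jb}=(\mathtt{p}^{\ctt,(n)})^{\wb\jb}+\sum_{n'}(\stt^{\ctt,(n')})^{\wb}(\mathtt{q}^{(n,n')})^{\jb}$ and induct on word length, differing only in bookkeeping: the paper picks a constant covering the finitely many levels where $\mathtt{p}^{\ctt,(n)}$ is nonzero and then inducts with a pure geometric bound, whereas you use $a_{m+1}\le P+NQ\,a_m$ at every level and absorb the factor $m+1$ by doubling the constant, which also avoids the paper's awkward handling of the empty word and of zero coefficients inside the logarithm.
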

\begin{proof}
    Let us define $\hat{\mathtt{p}}^{\ctt, (n)}:= (\sum_{k=1}^K b_1^{(n,k)}\, \ctt^{(k)} ) \otimes \eone $. Then, for all $\vb\in V$ and $\jb\in \{\eone, \dots, \eD, \eDbar\}$, we have
    \begin{align}
        \big(\stt^{\ctt,(n)}\big)^{\vb \jb} &= \big(\mathtt{p}^{\ctt, (n)}\big)^{\vb \jb} + \sum_{n'=1}^N \big(\stt_\ctt^{(n')}\big)^{\vb}\, \big(\mathtt{q}^{(n,n')}\big)^{\jb}\\
        &=  \big(\mathtt{p}^{0, (n)}\big)^{\vb \jb} + \hat{\mathtt{p}}^{\ctt, (n)} + \sum_{n'=1}^N \big(\stt_\ctt^{(n')}\big)^{\vb}\, \big(\mathtt{q}^{(n,n')}\big)^{\jb}\,.
    \end{align}
    Recall that $\mathtt{p}^{0, (n)}$ and $q^{(n,n')}$ are linear combinations of 1-letter words. Moreover, as $\ctt$ lives in $\big(T((\R^{\Dbar})^*)\big)^K$, so does $\hat{\mathtt{p}}^{\ctt, (n)} $. Thus, at some point, $\big(\hat{\mathtt{p}}^{\ctt, (n)}\big)^\vb = 0$ for long words, i.e. there exists $M\in \N$ such that $\big(\hat{\mathtt{p}}^{\ctt, (n)}\big)^\vb = 0$ for all $n=1,\dots, N$, $\vb\in V$ with $|\vb|\geq M$. We define
    \[ C_1 := \exp \left\{ \max_{n=1,\dots N} \max_{m=1,\dots, M} \max_{\vb\in V_m} \frac{\log \left|\left(\stt^{\ctt,(n)}\right)^\vb\right|}{m}\right\}\,,\]
    and we take 
    $$ C := \max\left\{ C_1, \max_{\jb \in \mcW_D} \max_{n=1,\dots, N} \sum_{n'=1}^N \left|\left(\mathtt{q}^{(n,n')}\right)^\jb\right|\right\}\,. $$
    By this construction, we have
    \begin{equation}
        \left|\left(\stt^{\ctt,(n)}\right)^\vb\right| \leq C_1^{|\vb|} \leq C^{|\vb|}
    \end{equation}
    for all words $\vb$ with length $|\vb|\leq M$. Assume $\left|\left(\stt^{\ctt,(n)}\right)^\vb\right| \leq C^{|\vb|}$ for all words $\vb$ with length $|\vb| \leq \bar M$, $\bar M \geq M$. Then for any letter $\jb\in \mcW_D$,
    \begin{equation}
        \left|\left(\stt^{\ctt,(n)}\right)^{\vb\, \jb}\right| \leq C^{|\vb|} \sum_{n'=1}^N \left|\left(\mathtt{q}^{(n,n')}\right)^\jb\right| \leq C^{|\vb\, \jb|}\,.
    \end{equation}
    The lemma follows by induction.
\end{proof}
From lemmas \ref{lemma: exponential_growth_level_tensor} and \ref{lemma: convergence_tensor_multidim_linear}  we  obtain the following corollary.
\begin{corollary}
\label{cor: state process as linear functional of signature}
    Consider $\ctt\in \big(T((\R^{\Dbar})^*)\big)^K$ and let the corresponding signature control $\ct \in \mcA_{Sig}$ be given by $$\ct_t = \la \ctt, \Wsig_t \ra :=\big(\langle \ctt^{(1)}, \Wsig_t\rangle, \cdots, \langle \ctt^{(K)}, \Wsig_t\rangle \big)\,.$$ Then the controlled state process $\st^\ct$ from SDE~\eqref{eq:multidim_system} can be written as
    $$
    \st^\ct_t= \la \stt^\ctt, \Wsig_t \ra :=\big( \langle \stt^{\ctt,(1)}, \Wsig_t\rangle, \dots,  \langle \stt^{\ctt,(N)}, \Wsig_t\rangle\big), \quad {t\in[0,T]}\,,
    $$
    where $\stt^\ctt=(\stt^{\ctt,(1)},\dots,\stt^{\ctt,(N)}) \in (\tensorExpspace)^N$ is the unique collection of extended tensors solving~\eqref{eq:tensor_equation_3}.
\end{corollary}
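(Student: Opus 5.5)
The plan is to define the candidate process $Y_t^{(n)} := \langle \stt^{\ctt,(n)}, \Wsig_t\rangle$ and show that it solves the SDE~\eqref{eq:multidim_system}. Uniqueness of strong solutions to this linear SDE with constant coefficients, where the control is a square-integrable progressively measurable drift term, then gives $Y=\st^\ct$. By Proposition~\ref{prop:tensor_equation_sol}, $\stt^\ctt$ is well defined, and by Lemma~\ref{lemma: convergence_tensor_multidim_linear} each component lies in $\tensorExpspace$. Lemma~\ref{lemma: exponential_growth_level_tensor}\ref{lemma: exponential_growth_level_tensor_2} then shows that $Y$ is well defined, progressively measurable, and satisfies $\E[\sup_t |Y_t|^2]<\infty$. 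The same holds for $\ct$, since $\ctt$ has finitely many nonzero entries.

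The key algebraic fact is the signature identity for a word $\wb\jb$:
\[
\Wsig_t^{\wb\jb} = \int_0^t \Wsig_s^{\wb}\circ \d W_s^{(j-1)}
\]
(with $\d W^{(0)}=\d t$). By linearity, for any finite tensor $\ell$ this gives
\[
\langle \ell\otimes\jb, \Wsig_t\rangle = \int_0^t \langle \ell,\Wsig_s\rangle\circ \d W_s^{(j-1)}.
\]
I will first work with truncations. Put $\ell_n:=\stt^{\ctt,(n)}$. Project the fixed-point equation~\eqref{eq:tensor_equation_3} to level $\lev$. Because $\mathtt{q}^{(n,n')}$ consists of one-letter words,
\[
\ell_n^{\le\lev} = \mathtt{p}^{\ctt,(n),\le \lev} + \sum_{n'} \ell_{n'}^{\le \lev-1}\otimes \mathtt{q}^{(n,n')} .
\]
Pairing with $\Wsig_t$ and applying the identity above yields, for $\lev$ large enough that $\mathtt{p}^{\ctt,(n),\le\lev}=\mathtt{p}^{\ctt,(n)}$,
\begin{align}
\langle \ell_n^{\le\lev},\Wsig_t\rangle &= \st_0^{(n)} + \int_0^t\Big(b_0^{(n)}+\sum_k b_1^{(n,k)}\ct^{(k)}_s + \sum_{n'} b_2^{(n,n')}\langle \ell_{n'}^{\le\lev-1},\Wsig_s\rangle\Big)\d s\\
&\quad+\sum_d\int_0^t\Big(\sigma_0^{(n,d)}+\sum_{n'}\sigma_2^{(n,d,n')}\langle \ell_{n'}^{\le\lev-1},\Wsig_s\rangle\Big)\circ \d W^{(d)}_s .
\end{align}
This is an exact identity among finitely many iterated Stratonovich integrals of Brownian motion.

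Next I pass to the limit $\lev\to\infty$. Lemma~\ref{lemma: exponential_growth_level_tensor}\ref{lemma: tensor functional convergence} gives $\langle \ell_{n}^{\le\lev},\Wsig\rangle\to Y^{(n)}$ in $L^2(\Omega;\sup_t)$. This handles the left-hand side and the $\d s$ integrals directly. For the Stratonovich integrals, I rewrite them as Itô integrals plus $\tfrac12$ quadratic covariation. The Itô parts converge in $L^2$ by the Itô isometry (or BDG).

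The \textbf{main obstacle} is the covariation term. It requires the martingale part of $\langle \ell_{n'}^{\le\lev-1},\Wsig\rangle$ to converge, not merely the process itself. I will handle this by computing it explicitly. The Itô decomposition of $\langle \ell,\Wsig_s\rangle$ for finite $\ell$ has martingale integrand $\langle \ell_{\cdot d}, \Wsig_s\rangle$, where $\ell_{\cdot d}$ removes a trailing letter $\alphabet(d)$. Hence
\[
\d\big[\langle \ell,\Wsig\rangle, W^{(d)}\big]_s=\langle \ell_{\cdot d},\Wsig_s\rangle\, \d s .
\]
Truncation commutes with this letter-stripping, and $\tensorExpspace$ is stable under it because the exponential bound shifts by one factor of $C$. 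So Lemma~\ref{lemma: exponential_growth_level_tensor}\ref{lemma: tensor functional convergence} applied to $(\ell_{n'})_{\cdot d}$ gives $L^2$ convergence of the covariation terms as well.

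Taking limits therefore shows that $Y$ solves~\eqref{eq:multidim_system}, with the Stratonovich integral understood as Itô plus half the covariation of the limit semimartingale. Pathwise uniqueness of the linear SDE then yields $Y=\st^\ct$ up to indistinguishability. Uniqueness of $\stt^\ctt$ is already given by Proposition~\ref{prop:tensor_equation_sol}.
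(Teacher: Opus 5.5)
Your proof is correct, and it is more complete than the paper's. The paper only states that the corollary follows from Lemma~\ref{lemma: exponential_growth_level_tensor} and Lemma~\ref{lemma: convergence_tensor_multidim_linear}. It leaves implicit why $\langle \stt^{\ctt},\Wsig\rangle$ actually solves \eqref{eq:multidim_system}. The natural way to supply that step with the paper's tools is the extended It\^o--Stratonovich formula for pairings of $\tensorExpspace$-tensors with Brownian signatures (Theorem 3.3 in \cite{jaber2024pathdependent}, which the paper uses in the proof of Lemma~\ref{lemma: injectiveness of signature control}). Stripping the last letter in \eqref{eq:tensor_equation_3} gives $\stt^{\ctt,(n)}\,|_{\eone}=b_0^{(n)}\emptyword+\sum_k b_1^{(n,k)}\ctt^{(k)}+\sum_{n'}b_2^{(n,n')}\stt^{\ctt,(n')}$ and $\stt^{\ctt,(n)}\,|_{\alphabet(d+1)}=\sigma_0^{(n,d)}\emptyword+\sum_{n'}\sigma_2^{(n,d,n')}\stt^{\ctt,(n')}$. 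Inserting these into that formula gives the SDE in one line, and uniqueness finishes. You instead re-derive exactly the instance of the formula you need. You write exact identities at each truncation level and pass to $L^2$-limits using only the truncation-convergence statement of Lemma~\ref{lemma: exponential_growth_level_tensor}. The Stratonovich correction is handled through the stability of $\tensorExpspace$ under removing a trailing letter. This costs some bookkeeping, but it avoids importing a further external theorem beyond the convergence estimate.

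Two small things to tighten. First, to claim that your limiting correction term equals $\tfrac12\,\d[Y^{(n')},W^{(d)}]$, state explicitly that $\langle (\stt^{\ctt,(n')})_{\cdot d},\Wsig_s\rangle=\sigma_0^{(n',d)}+\sum_{n''}\sigma_2^{(n',d,n'')}Y^{(n'')}_s$, by the second identity above. This is precisely the $\d W^{(d)}$-coefficient of $Y^{(n')}$ in your limit equation. Equivalently, just check that $Y$ solves the It\^o form of \eqref{eq:multidim_system} and invoke uniqueness there. Second, with your convention $\d W^{(0)}=\d t$, the letter attached to $W^{(d)}$ is $\alphabet(d+1)$, not $\alphabet(d)$; the paper's \eqref{eq:tensor_equation_2} has the same off-by-one slip.
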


In light of Lemma \ref{lemma: convergence_tensor_multidim_linear}, we will assume the driving noise to be Brownian from this point forward. Later, we will discuss  similar results for other processes (e.g. fractional Brownian motion) as well as the associated challenges involved.

\begin{assumption}
    The process $W$ is a $D$-dimensional standard Brownian motion.
\end{assumption}

\subsection{Linear-quadratic cost functional and its algebraic properties}

Next, we are going to express the linear-quadratic cost function as a pairing with the driving noise signature. Additionally, we are also going to describe the algebraic and convexity properties of the cost function.

\subsubsection{Tensor representation of the cost functional}

For linear state dynamics $\st^\ct$ as in~\eqref{eq:multidim_system}, a widely used cost functional in stochastic control problems is the linear-quadratic criterion
\begin{align}
    J(\ct) &:= \E \Bigg[ \int_0^T \Big\{ \big(\st_t^\ct\big)^\top\, A(t)\,  \st_t^\ct\, +  \ct_t^\top\, B(t)\, \ct_t \\
    \label{eq: original cost functional} &\hspace{7em} + 2\, C(t)^\top\, \st_t^\ct + 2\, D(t)^\top\, \ct_t\, \Big\} \d t + \big(\st_T^\ct\big)^\top\, E\,  \st_T^\ct + 2\, G^\top\, \st_T^\ct \Bigg],
\end{align}
for continuous matrix valued mappings $A,B,C,D$ on $[0,T]$ and matrices $E,G$ such that $A(t), E \in \R^{N \times N}$ are symmetric, positive semi-definite, $B(t) \in \R^{K \times K}$ is symmetric positive definite, and $C(t) \in \R^{N \times 1}$, $D \in \R^{K \times 1}$ for any $t \in [0,T]$. 

As we seek to minimize these costs, it is natural to focus on admissible controls $\ct$ from
\begin{align}
    \mcA &:= \bigg\{ \ct: [0,T]\times \Omega \to \R^K \,\bigg|\, \ct \text{ is $\mbF$-progressively measurable}\\
    & \hspace{15em} \text{and } \E \left[ \int_0^T |\ct_s|^2\, \d s\right] < \infty\bigg\}\,.
\end{align}
It follows that $\mcA_{\Sig}\subset \mcA$ as each entry of $\Wsig$ is square-integrable and the tensors $\ctt \in (T(\R^{D+1}))^K$ defining signature controls have only finitely many non-zero entries.

By Stone-Weierstra{\ss}, we can uniformly approximate the continuous matrix functions $A,B,C,D$ with polynomials in $t$. Thus, we are justified in considering the polynomial case when
\begin{align}
    A(t) := \sum_{m=0}^M \frac{t^m}{m!} A_{m}\,, \quad B(t) := \sum_{m=0}^M \frac{t^m}{m!} B_{m}\,,\\
    C(t):= \sum_{m=0}^M \frac{t^m}{m!} C_{m}\,, \quad D(t):= \sum_{m=0}^M \frac{t^m}{m!} D_{m}\\
\end{align}
where $A_m\in \R^{N\times N}$ and $B_m\in \R^{K\times K}$ are symmetric such that $A(t)\geq 0$, $B(t) > 0$ for all $t\in[0,T]$. Under this assumption, it turns out that not only is the state process from a signature control $\ct_t = \langle \ctt,\Wsig_t \rangle$ of the signature form $\st^\ct_t=\langle \stt^\ctt,\Wsig_t\rangle$, $t \in [0,T]$, but also the induced cost can be described by a linear functional on the extended tensor algebra, albeit applied to the {expected} signature $\E[\Wsig_T]$.  

\begin{proposition}
\label{prop: cost functional as linear functional}
    The costs incurred by a signature policy $\ct=\la \ctt,\Wsig\ra\in \mcA_\Sig$  can be written in the form
    \begin{align}
    \label{eq:perf_criterion_sig}
        J(\ct) =\la \mfq(\ctt), \E\big[ \Wsig_T\big] \ra = \lim_{\lev\to \infty} \la \mfq^\lev(\ctt), \E \big[ \Wsig_T\big] \ra\,,
    \end{align}
    where, for $\ctt \in \big(T((\R^{\Dbar})^*)\big)^K$, $\mfq(\ctt)$ is in $\tensorExpspace$ with the following explicit expression,
    \begin{align}
        \label{eq: cost functional tensor}
        \mfq(\ctt) := \Bigg\{ &\sum_{m=0}^M \sum_{ n, n'=1}^N A_m^{(n,n')}\, \eone^{\otimes m} \shpr \stt^{\ctt,(n)} \shpr \stt^{\ctt,(n')}\\
        &\quad + \sum_{m=0}^M \sum_{k, k'=1}^K B_m^{(k,k')}\,\eone^{\otimes m} \shpr \ctt^{(k)} \shpr \ctt^{(k')}\\
        &\quad+ 2\, \sum_{m=0}^M \sum_{n=1}^N C_m^{(n)}\, \eone^{\otimes m} \shpr \stt^{\ctt,(n)}\\
        &\quad + 2\, \sum_{m=0}^M \sum_{k=1}^K D_m^{(k)}\, \eone^{\otimes m} \shpr \ctt^{(k)} \Bigg\} \otimes \eone\\
        & + \sum_{n, n'=1}^N E^{(n,n')}\, \stt^{\ctt,(n)} \shpr \stt^{\ctt,(n')} + 2\,  \sum_{n=1}^N G^{(n)}\, \stt^{\ctt,(n)}\,,
    \end{align}
    and $\mfq^\lev(\ctt)\in T((\R^\Dbar)^*)$ is a version of $\mfq(\ctt)$ obtained by replacing $\stt^{\ctt, (n)}$ with its truncated version $\stt^{\ctt, (n), \lev} := \big(\stt^{\ctt, (n)}\big)^{\leq \lev}$.
\end{proposition}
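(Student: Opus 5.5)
The plan is to expand the cost \eqref{eq: original cost functional} term by term, write each term as a pairing with the signature, and then move the expectation inside the pairing using Lemma~\ref{lemma: exponential_growth_level_tensor}.

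\textbf{Membership in $\tensorExpspace$.} By Corollary~\ref{cor: state process as linear functional of signature} we have $\st^\ct_t=\la\stt^\ctt,\Wsig_t\ra$ with $\stt^{\ctt,(n)}\in\tensorExpspace$. Trivially $\ctt^{(k)}\in T((\R^\Dbar)^*)\subset\tensorExpspace$, and $\eone^{\otimes m}$ is also in $\tensorExpspace$. Part~\ref{lemma: exponential_growth_level_tensor_1} of Lemma~\ref{lemma: exponential_growth_level_tensor} says $\tensorExpspace$ is closed under linear combinations, $\shpr$ and $\otimes$. Hence every summand in \eqref{eq: cost functional tensor} lies in $\tensorExpspace$, and so does $\mfq(\ctt)$. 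The same argument puts $\mfq^\lev(\ctt)$ in $T((\R^\Dbar)^*)$, since only finitely many nonzero tensors are combined.

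\textbf{Identity $J(\ct)=\la\mfq(\ctt),\E[\Wsig_T]\ra$.} Expanding the quadratic forms with $A(t)=\sum_m \frac{t^m}{m!}A_m$ gives a finite sum of expectations. The first type is $\E\big[\int_0^T \frac{t^m}{m!}\la\ell_1,\Wsig_t\ra\la\ell_2,\Wsig_t\ra\,\d t\big]$ with $\ell_i\in\{\stt^{\ctt,(n)},\ctt^{(k)}\}$. The second type is the linear terms $\E\big[\int_0^T\frac{t^m}{m!}\la\ell,\Wsig_t\ra\,\d t\big]$. The third type is the terminal terms $\E[\la\ell_1,\Wsig_T\ra\la\ell_2,\Wsig_T\ra]$ and $\E[\la\ell,\Wsig_T\ra]$.

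For each linear term I use $\frac{t^m}{m!}=\la\eone^{\otimes m},\Wsig_t\ra$ and the extended shuffle identity $\la\eone^{\otimes m},\Wsig_t\ra\la\ell,\Wsig_t\ra=\la\eone^{\otimes m}\shpr\ell,\Wsig_t\ra$, valid since both tensors are in $\tensorExpspace\subset\tensorLspace$. Then part~\ref{lemma: tensor integral exchange operation} gives $\la(\eone^{\otimes m}\shpr\ell)\otimes\eone,\E[\Wsig_T]\ra$. The terminal linear term comes directly from part~\ref{lemma: tensor exchange operation}.

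For the quadratic terms, part~\ref{lemma: integral shpr approx finite} applies, and so does part~\ref{lemma: mean shpr approx finite} at $t=T$. Each expectation equals $\lim_{\lev_1,\lev_2}$ of the pairing of the truncated shuffle with $\E[\Wsig_T]$. To get the untruncated form, I first note that $\ell_1\shpr\ell_2\in\tensorExpspace$, so $\eone^{\otimes m}\shpr\ell_1\shpr\ell_2\in\tensorExpspace$. Applying the extended shuffle identity pathwise and then parts~\ref{lemma: tensor exchange operation} and~\ref{lemma: tensor integral exchange operation} to this single tensor gives the exact equality directly, without limits. Summing the terms with the coefficient matrices yields $J(\ct)=\la\mfq(\ctt),\E[\Wsig_T]\ra$, because the pairing with $\E[\Wsig_T]$ is linear on $\tensorExpspace$ (each summand converges).

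\textbf{Limit in $\lev$.} I expect this to be the main technical point, since it must be checked at the level of the random integrand and not coefficientwise. For $\mfq^\lev$, only the $\stt$ factors are truncated at the common level $\lev$, while the $\ctt$ factors are already finite. Its pairing with $\E[\Wsig_T]$ is therefore a finite sum of terms of the form $\la\eone^{\otimes m}\shpr\stt^{(n),\lev}\shpr\stt^{(n'),\lev}\otimes\eone,\E[\Wsig_T]\ra$, together with analogous mixed and terminal terms. Part~\ref{lemma: integral shpr approx finite} with $\lev_1=\lev_2=\lev$ handles the quadratic pieces. Part~\ref{lemma: mean shpr approx finite} handles the terminal quadratic pieces, where the double limit implies the diagonal limit. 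The linear terms are handled by part~\ref{lemma: tensor functional convergence} combined with the exchange statements. This gives $\lim_\lev\la\mfq^\lev(\ctt),\E[\Wsig_T]\ra=J(\ct)$.

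One subtlety needs care. Part~\ref{lemma: integral shpr approx finite} assumes $\ell_i\in\tensorExpspace$, while a mixed term such as $\ctt^{(k)}\shpr\stt^{(n)}$ truncates only one factor. This is harmless because $\ctt^{\le\lev}=\ctt$ for large $\lev$.
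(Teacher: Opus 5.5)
Your proposal is correct and follows the paper's own proof. You expand the cost, use the extended shuffle identity together with statements \ref{lemma: tensor exchange operation}--\ref{lemma: tensor integral exchange operation} of Lemma~\ref{lemma: exponential_growth_level_tensor} to get $J(\ct)=\la\mfq(\ctt),\E[\Wsig_T]\ra$, use closure of $\tensorExpspace$ for membership, and use statements \ref{lemma: mean shpr approx finite}--\ref{lemma: integral shpr approx finite} for the limit in $\lev$, only spelled out in more detail. The mixed-term subtlety you flag at the end does not arise, since $\mfq(\ctt)$ contains no $\ctt^{(k)}\shpr\stt^{\ctt,(n)}$ terms (the cost has no state--control cross term).
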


The algebraic tensor representation in \eqref{eq:perf_criterion_sig} is quite  powerful: we have transformed the stochastic optimal control problem into a deterministic one. In practice, we cannot numerically compute $\mfq(\ctt)$ fully as it involves infinite dimensional tensors $\stt^{\ctt,(n)}$. A sensible approximation in this situation is to truncate $\stt^{\ctt,(n)}$ first at some level $\lev$, then use this as a proxy for the full tensor $\stt^{\ctt, (n)}$ and, finally, consider the approximate criterion $\E \left[\langle \mfq^\lev(\ctt), \Wsig_T \rangle\right]$
where $\mfq^\lev(\ctt)$ is determined by replacing $\stt^{\ctt, (n)}$ with the truncated version $\stt^{\ctt, (n), \lev}$. Observe that $\mfq^\lev (\ctt)$ now lives in the space $T((\R^\Dbar)^*)$. As such, $\langle \mfq^\lev(\ctt),  \Wsig_T\rangle$ is a linear combination of finitely-many elements of $\Wsig_T$ and thus $\E \left[\langle \mfq^\lev(\ctt), \Wsig_T \rangle\right] = \langle \mfq^\lev(\ctt), \E[\Wsig_T] \rangle$.

\begin{proof}[Proof of Proposition \ref{prop: cost functional as linear functional}]
    From the extended shuffle product and the statements of Lemma \ref{lemma: exponential_growth_level_tensor}, we are able to write the cost functional as follows:
    \begin{align}
        J(\ct) &= \E \Bigg[ \int_0^T \Bigg\{ \quad \sum_{m=0}^M \sum_{ n, n'=1}^N A_m^{(n,n')}\, \frac{t^m}{m!}\, \st^{\ct, (n)}_t\, \st^{\ct, (n')}_t\\
        &\qquad \qquad \qquad + \sum_{m=0}^M \sum_{k, k'=1}^K B_m^{(k,k')}\,\frac{t^m}{m!}\, \ct^{(k)}_t\, \ct^{(k')}_t\\
        &\qquad \qquad \qquad + 2\, \sum_{m=0}^M \sum_{n=1}^N C_m^{(n)}\, \frac{t^m}{m!}\, \st^{\ct, (n)}_t\\
        &\qquad \qquad \qquad + 2\, \sum_{m=0}^M \sum_{k=1}^K D_m^{(k)}\, \frac{t^m}{m!} \, \ct^{(k)}_t \Bigg\} \, \d t\\
        &\qquad \quad +  \sum_{n, n'=1}^N E^{(n,n')}\, \st^{\ct, (n)}_T\, \st^{\ct, (n')}_T + 2\,  \sum_{n=1}^N G^{(n)}\, \st^{\ct, (n)}_T\Bigg]\\
        &= \langle \mfq(\ctt), \E[\Wsig_T]\rangle\,,
    \end{align}
    where $\mfq(\ctt)$ is given by \eqref{eq: cost functional tensor}. As $\mfq(\ctt)$ is a linear combination of finitely-many tensors in $\tensorExpspace$, the tensor also belong in this space. From Lemma \ref{lemma: exponential_growth_level_tensor} (statement \ref{lemma: mean shpr approx finite} and \ref{lemma: integral shpr approx finite}), we have
    \begin{align}
        \langle \mfq(\ctt), \E[\Wsig_T]\rangle = \lim_{\lev \to \infty} \langle \mfq^\lev(\ctt), \E[\Wsig_T]\rangle\,.
    \end{align}
\end{proof}
Next, we seek to investigate the algebraic properties of the cost function $J(\langle \cdot, \Wsig\rangle) = \langle \mfq(\cdot), \E[\Wsig_T]\rangle$.
\begin{lemma}
\label{lemma: polynomial lemma 1}
    Let $L_1, L_2: \big(T^{ M} ((\R^\Dbar)^*)\big)^K \to \big(T^\ext((\R^\Dbar)^*)\big)^N$ be mappings such that for $i\in\{1,2\}$, $L_i$ can be written as
    \begin{align}\label{eq:Lrecursion}
    L_i(\cdot) = P_{i,0} + P_{i,1}(\cdot) + P_{i,2}(L_i(\cdot))
    \end{align}
    for some $P_{i,0}\in \big(T((\R^\Dbar)^*)\big)^N$, linear maps $P_{i,1}: \big(T^{ M} ((\R^\Dbar)^*)\big)^K \to \big(T((\R^\Dbar)^*)\big)^N$ and $P_{i,2}: \big(T^\ext((\R^\Dbar)^*)\big)^N \to \big(T^\ext((\R^\Dbar)^*)\big)^N$
    with 
    \begin{align}
        \Proj_j^N \circ \underbrace{P_{i,2} \circ \dots \circ P_{i,2}}_{\text{$j$ times}} = 0\quad \forall j\in \N.  \label{eq:shift operator}
    \end{align} Here $\Proj_j^N$ is the projection map from $\big(T^\ext((\R^\Dbar)^*)\big)^N$ to $\big(T^{j}((\R^\Dbar)^*)\big)^N$. It follows that:
    \begin{enumerate}[wide, labelwidth=!, labelindent=10pt]
        \item \label{lemma: statement 1} For every $i\in \{1,2\}$, $n\in 1,\dots, N$, $\lev \in \N$, and $G\in T^{\ext}(\R^\Dbar)$, the expression
        $$\la \big(L_i(\ctt)^{(n)}\big)^{\leq \lev}, G \ra$$
        is an affine function in terms of the coefficients of $\ctt$.
        \item \label{lemma: statement 2}  For every $n,n'=1,\dots, N$, $G\in T^\ext(\R^\Dbar)$, $\lev_1, \lev_2 \in \N$, and linear map $P: T((\R^\Dbar)^*)\to T((\R^\Dbar)^*)$, the expression
        $$\la P \Big( \big(L_1(\ctt)^{(n)}\big)^{\leq \lev_1}\shpr \big(L_2(\ctt)^{(n')}\big)^{\lev_2} \Big), G \ra$$
        is a quadratic polynomial in terms of the coefficients of $\ctt$.
    \end{enumerate}
\end{lemma}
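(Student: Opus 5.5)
The plan is to first show that the truncation $\big(L_i(\ctt)\big)^{\leq \lev}$ is an \emph{affine} function of $\ctt$ with values in the finite-dimensional space $\big(T^{\lev}((\R^\Dbar)^*)\big)^N$. Statements \ref{lemma: statement 1} and \ref{lemma: statement 2} then follow by pairing with $G$ and by bilinearity of the shuffle product.

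To obtain the affine structure, iterate the recursion \eqref{eq:Lrecursion} $\lev+1$ times:
\begin{align}
L_i(\ctt) = \sum_{j=0}^{\lev} P_{i,2}^{\circ j}\big(P_{i,0} + P_{i,1}(\ctt)\big) + P_{i,2}^{\circ (\lev+1)}\big(L_i(\ctt)\big)\,.
\end{align}
Here linearity of $P_{i,2}$ lets us distribute it over sums at each step.

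Next, apply $\Proj^N_{\lev}$ to this identity. Condition \eqref{eq:shift operator} is meant to say that $P_{i,2}^{\circ j}$ has no components of level below $j$, so it is a strict shift in degree. The intended reading is that $\Proj^N_{j-1}\circ P_{i,2}^{\circ j}=0$; read literally at index $j$, the condition is slightly stronger and the argument only improves. Under this reading the remainder term vanishes after projection. We are left with
\begin{align}
\big(L_i(\ctt)\big)^{\leq \lev} = \Proj^N_\lev\Big(\sum_{j=0}^{\lev} P_{i,2}^{\circ j}(P_{i,0})\Big) + \Proj^N_\lev\Big(\sum_{j=0}^{\lev} P_{i,2}^{\circ j}\circ P_{i,1}(\ctt)\Big)\,.
\end{align}
The first term is a constant. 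The second is a linear map of $\ctt$, being a composition of linear maps.

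Since $\ctt$ ranges over the finite-dimensional space $\big(T^{M}((\R^\Dbar)^*)\big)^K$, write $\ctt=\sum_{k}\sum_{|\wb|\le M} c^{(k),\wb}\, \wb\, \mathbf{e}_k$ for finitely many coefficients $c^{(k),\wb}$. Then $\big(L_i(\ctt)^{(n)}\big)^{\leq\lev}=a_0+\sum c^{(k),\wb} a_{k,\wb}$ for fixed $a_0,a_{k,\wb}\in T^{\lev}((\R^\Dbar)^*)$. Pairing with any $G\in T^\ext(\R^\Dbar)$ is a finite sum, since only levels $\le \lev$ appear, and it is linear in the first argument. This yields an affine function of the $c^{(k),\wb}$, proving \ref{lemma: statement 1}.

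For \ref{lemma: statement 2}, insert the two affine expansions (at truncation levels $\lev_1$ and $\lev_2$) into the shuffle product. The shuffle product is bilinear and maps $T^{\lev_1}\times T^{\lev_2}$ into $T^{\lev_1+\lev_2}$, so the result is a finite sum: a constant, plus terms linear in the coefficients $c$, plus terms of the form $c^{(k),\wb}c^{(k'),\wb'}\, a_{k,\wb}\shpr a'_{k',\wb'}$. All of these are elements of $T((\R^\Dbar)^*)$. Applying the linear map $P$ keeps them in $T((\R^\Dbar)^*)$ and preserves the coefficient structure. Pairing with $G$ is then a finite and linear operation, which gives a polynomial of degree at most two in the coefficients of $\ctt$.

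The main obstacle is the first step. One must make the nilpotency-type condition \eqref{eq:shift operator} precise enough to kill the remainder $P_{i,2}^{\circ(\lev+1)}(L_i(\ctt))$ below level $\lev+1$. This requires checking that $P_{i,2}$ genuinely raises the minimal level by at least one, as it does for $\stt\mapsto\stt\otimes \mathtt q$ with $\mathtt q^\emptyword=0$ in \eqref{eq:tensor_equation_3}, where each application adds at least one letter. Everything after that is finite-dimensional bookkeeping.
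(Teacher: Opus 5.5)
Your proposal is correct and follows the paper's route. You unroll the recursion and use the shift condition to kill the remainder, so the truncation is affine in $\ctt$; bilinearity of $\shpr$ together with linearity of $P$ and of the pairing then gives the quadratic statement. The one difference is an off-by-one: the paper unrolls $\lev$ times and invokes the literal condition $\Proj^N_{\lev}\circ P_{i,2}^{\circ \lev}=0$, whereas you unroll $\lev+1$ times and need only $\Proj^N_{\lev}\circ P_{i,2}^{\circ(\lev+1)}=0$. Your version is the one the intended application $\stt\mapsto\sum_{n'}\stt^{(n')}\otimes\mathtt{q}^{(n,n')}$ actually satisfies, since that map shifts the degree by exactly one letter.
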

\begin{proof} 
To prove statement \ref{lemma: statement 1}, we have that by repeatedly iterating equation~\eqref{eq:Lrecursion}
        \begin{align}
            L(\ctt) &= \sum_{m=0}^{\lev-1} (\underbrace{P_2\circ \dots \circ P_2}_{\text{$m$ times}})(P_0) + (\underbrace{P_2\circ \dots \circ P_2}_{\text{$m$ times}}\circ\, P_1)(\ctt)\\
            &\qquad\qquad  + (\underbrace{P_2\circ \dots \circ P_2}_{\text{$\lev$ times}})(L(\ctt)))\,.
        \end{align}
        However, we have $\Proj_{\lev}^N \circ \underbrace{P_2 \circ \dots \circ P_2}_{\text{$\lev$ times}} = 0$. Thus, 
        \[ \Proj_{\lev}^N (L(\ctt)) = \Proj_{\lev}^N \left(\sum_{m=0}^{\lev-1} (\underbrace{P_2\circ \dots \circ P_2}_{\text{$m$ times}})(P_0) + (\underbrace{P_2\circ \dots \circ P_2}_{\text{$m$ times}}\circ\, P_1)(\ctt)\right)\,. \]
        As  $\Proj_{\lev}^N \circ \overbrace{P_2\circ \dots \circ P_2}^{\text{$\lev$ times}}\circ\, P_1$ is a linear map acting on $\ctt$, it follows that  $\langle \big(L(\ctt)^{(n)}\big)^{\leq \lev}, G \rangle$ is an affine function in terms of the coefficients of $\ctt$.
As for statement \ref{lemma: statement 2},  observe that $\la P \Big( \big(L_1(\ctt)^{(n)}\big)^{\leq \lev_1}\shpr \big(L_2(\ctt)^{(n')}\big)^{\lev_2} \Big), G \ra$ is a linear functional over the coefficients of $\big(L_1(\ctt)^{(n)}\big)^{\leq \lev_1}\shpr \big(L_2(\ctt)^{(n')}\big)^{\lev_2}$. However, those coefficients have the form
        \begin{align}
            &\la \big(L_1(\ctt)^{(n)}\big)^{\leq \lev_1}\shpr \big(L_2(\ctt)^{(n')}\big)^{\lev_2}, \vb\ra\\
            &\qquad = \sum_{|\vb_1| + |\vb_2| = |\vb|} \la \big(L_1(\ctt)^{(n)}\big)^{\leq \lev_1}, \vb_1 \ra\, \la \big(L_2(\ctt)^{(n')}\big)^{\leq \lev_2}, \vb_2 \ra\, \langle \vb_1\shpr \vb_2, \vb \rangle\,
        \end{align}
        for any word $\vb$. From the previous statement, it follows that  $\la \big(L_1(\ctt)^{(n)}\big)^{\leq \lev_1}, \vb_1 \ra$ and $\la \big(L_2(\ctt)^{(n')}\big)^{\leq \lev_2}, \vb_2 \ra$ are affine functions of the coefficients of $\ctt$.
\end{proof}
In light of our tensor equation~\eqref{eq:tensor_equation_3} encoding the system dynamics, it is straightforward to see that the mapping $\ctt \mapsto \stt^\ctt$ satisfies the conditions of Lemma \ref{lemma: polynomial lemma 1}. As we can decompose $\langle \mfq^\lev(\ctt), \E [\Wsig_T]\rangle$ into a finite sum of terms in the form of $\langle P(\stt^{\ctt, (n), \lev} \shpr \stt^{\ctt, (n'), \lev}), G\rangle$ or $\langle \stt^{\ctt, (n), \lev}, G\rangle$, the expression $\la \mfq^\lev(\ctt), \E \big[ \Wsig_T\big] \ra$ is a quadratic polynomial. Additionally, Proposition \ref{prop: cost functional as linear functional} tells us that $$\la \mfq^\lev(\cdot), \E \big[ \Wsig_T\big] \ra \xrightarrow{\text{pointwise}} \la \mfq(\cdot),  \E[\Wsig_T] \ra\,,$$ so the actual cost functional is also a quadratic polynomial of the coefficients of $\ctt$.
\begin{corollary}
\label{lemma: full cost functional as polynomial}
    Let $\mfq(\cdot)$ be as in \eqref{eq: cost functional tensor}. Then, restricted to $\big(T^M((\R^{\Dbar})^*)\big)^K$, the functions $\langle \mfq^\lev(\cdot), \E\big[ \Wsig_T\big] \rangle$ and $\langle \mfq(\cdot), \E[\Wsig_T]\rangle$ are quadratic polynomials in terms of the tensor coefficients.
\end{corollary}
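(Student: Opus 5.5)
We will first verify that the map $\ctt\mapsto\stt^\ctt$ fits the framework of Lemma~\ref{lemma: polynomial lemma 1}, and then treat the truncated and the full cost in turn.

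\textbf{Fitting Lemma~\ref{lemma: polynomial lemma 1}.} From \eqref{eq:tensor_equation_2}--\eqref{eq:tensor_equation_3} we can write $\stt^\ctt = P_0 + P_1(\ctt) + P_2(\stt^\ctt)$ with the following choices:
\begin{itemize}
\item $P_0^{(n)} = \st_0^{(n)}\emptyword + b_0^{(n)}\eone + \sum_d \sigma_0^{(n,d)}\alphabet(d)$, which lies in $T((\R^\Dbar)^*)$;
\item $P_1(\ctt)^{(n)} = \big(\sum_k b_1^{(n,k)}\ctt^{(k)}\big)\otimes\eone$, which is linear in $\ctt$ and maps $T^M$ into $T^{M+1}$;
\item $P_2(\mathtt{y})^{(n)} = \sum_{n'}\mathtt{y}^{(n')}\otimes\mathtt{q}^{(n,n')}$.
\end{itemize}
Each $\mathtt{q}^{(n,n')}$ is a combination of one-letter words. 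Hence $P_2$ raises the minimal nonzero word length by one, and $j$ iterations of $P_2$ produce only words of length at least $j$. As written, $\Proj_j^N$ keeps words of length $\le j$, so the words of length exactly $j$ survive. We therefore read condition \eqref{eq:shift operator} with strict truncation below $j$; equivalently, we iterate once more. This index shift only changes the number of iterations, and the argument in the proof of the lemma adapts verbatim.

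\textbf{Truncated cost.} Statement~\ref{lemma: statement 1} now shows that every coefficient $\langle(\stt^{\ctt,(n)})^{\le\lev},\vb\rangle$ is affine in the coefficients of $\ctt$. Next we expand $\langle\mfq^\lev(\ctt),\E[\Wsig_T]\rangle$ using \eqref{eq: cost functional tensor} with $\stt$ replaced by its truncation. Each term falls into one of the following types:
\begin{itemize}
\item $\langle P(\stt^{\ctt,(n),\lev}\shpr\stt^{\ctt,(n'),\lev}),\E[\Wsig_T]\rangle$, with $P(x)=(\eone^{\otimes m}\shpr x)\otimes\eone$ or $P=\mathrm{id}$. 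This is quadratic by statement~\ref{lemma: statement 2}, taking $L_1=L_2=\stt^{\cdot}$ and $G=\E[\Wsig_T]$.
\item Terms linear in $\stt^{\ctt,(n),\lev}$. These are affine by statement~\ref{lemma: statement 1} combined with linearity of $P$.
\item The $B$- and $D$-terms. These involve $\ctt$ directly; since $\ctt\in T^M$, they are finite sums of products of at most two coefficients of $\ctt$, hence quadratic.
\end{itemize}
Summing, the truncated cost is a quadratic polynomial in the finitely many coordinates of $\ctt\in(T^M)^K$.

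\textbf{Full cost.} Here we use a closedness argument. By Proposition~\ref{prop: cost functional as linear functional}, $\langle\mfq^\lev(\ctt),\E[\Wsig_T]\rangle\to\langle\mfq(\ctt),\E[\Wsig_T]\rangle$ pointwise on $(T^M)^K$. The space of polynomials of degree at most $2$ in the finitely many variables $d=K\cdot\dim T^M$ is finite-dimensional. Fix $\binom{d+2}{2}$ unisolvent nodes. A polynomial in this space is determined linearly by its values at these nodes, so pointwise convergence at the nodes gives convergence of the coefficient vectors. The limiting coefficients define a quadratic polynomial that agrees with the limit function everywhere, because pointwise limits at every point coincide with the evaluation of the coefficient limit.

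\textbf{Main obstacle.} The only delicate point is the first step: checking the nilpotency-type condition \eqref{eq:shift operator} with the correct index convention, and confirming that the maps $P_1,P_2$ take values in the spaces required by Lemma~\ref{lemma: polynomial lemma 1}. The shift $\ctt\mapsto\ctt\otimes\eone$ is harmless here. Once this identification is made, the remaining steps are a direct consequence of Lemma~\ref{lemma: polynomial lemma 1} and the finite-dimensional closedness of quadratic polynomials.
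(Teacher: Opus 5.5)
Your proposal is correct and follows the paper's route. You check that $\ctt\mapsto\stt^\ctt$ fits Lemma~\ref{lemma: polynomial lemma 1}, split the truncated cost into shuffle-type, linear and direct $\ctt$-terms covered by that lemma, and conclude for the full cost because a pointwise limit of quadratic polynomials in finitely many variables is again one. Your reading of the nilpotency condition as $\Proj_{j-1}^N\circ P_2^{\circ j}=0$ is right: as literally written it generally fails, since $j$-fold iteration of $P_2$ leaves nonzero words of length exactly $j$. That correction and your unisolvent-node argument for the limit are both valid refinements of steps the paper leaves implicit.
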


\subsubsection{Strict convexity of the cost functional}

The linear-quadratic cost function $J(\cdot)$ in \eqref{eq: original cost functional} is strictly convex in the space of controls $\mcA$; automatically, it is also convex in the subspace $\mcA_\Sig$. As we are working with the tensor representation of the control, we will also need the cost function $\langle \mfq(\cdot), \E[\Wsig_T] \rangle$ to be strictly convex in the space $\big(T((\R^\Dbar)^*)\big)^K$. To show this, we need the following lemma.
\begin{lemma}
\label{lemma: injectiveness of signature control}
    The empty word $\ell = \emptyword$ is the only $\ell\in T((\R^\Dbar)^*)$ such that $\langle \ell, \Wsig_t \rangle = 0$ for all $t\in [0,T]$,  $\Pb$-almost surely.
\end{lemma}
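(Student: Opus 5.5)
Since $\langle \emptyword, \Wsig_t\rangle = 1$, the statement must mean that the \emph{zero} tensor is the only such $\ell$; this is also what the convexity argument that follows needs. I will prove: if $\ell\in T((\R^\Dbar)^*)$ satisfies $\langle \ell, \Wsig_t\rangle = 0$ for all $t\in[0,T]$, $\Pb$-a.s., then $\ell = 0$.

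The proof is by induction on the degree $M$ of $\ell$, i.e.\ the largest length of a word with nonzero coefficient. For a letter $\jb\in\mcW_\Dbar$, write $\ell|\jb := \sum_{\vb} \ell^{\vb\jb}\,\vb$ for the tensor obtained by collecting the words of $\ell$ that end in $\jb$ and deleting that last letter. Then $\ell = \ell^{\emptyword}\emptyword + \sum_{\jb} (\ell|\jb)\otimes \jb$, and each $\ell|\jb$ has degree at most $M-1$.

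\emph{Base case.} If $M=0$, then $\ell = c\,\emptyword$ and $\langle\ell,\Wsig_t\rangle \equiv c$, so $c=0$.

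\emph{Inductive step: dynamics of $Y$.} By the definition of the iterated Stratonovich integrals (one more integration in the last letter), the process $Y_t := \langle \ell,\Wsig_t\rangle$ satisfies
\begin{equation}
Y_t = \ell^{\emptyword} + \int_0^t \langle \ell|\eone, \Wsig_s\rangle\,\d s + \sum_{d=1}^D \int_0^t \langle \ell|\alphabet(d+1), \Wsig_s\rangle \circ \d W^{(d)}_s .
\end{equation}
The integrands are polynomials in finitely many signature entries, hence continuous semimartingales. I convert the Stratonovich integrals to Itô form. The correction terms are finite-variation $\d s$-integrals, so the continuous local martingale part of $Y$ is
\begin{equation}
\sum_{d=1}^D \int_0^t \langle \ell|\alphabet(d+1), \Wsig_s\rangle\,\d W^{(d)}_s .
\end{equation}

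\emph{Inductive step: killing the Brownian letters.} Suppose $Y\equiv 0$. By uniqueness of the decomposition of a continuous semimartingale, this local martingale part vanishes. Hence its quadratic variation
\begin{equation}
\sum_d \int_0^T \langle \ell|\alphabet(d+1),\Wsig_s\rangle^2\,\d s
\end{equation}
is zero a.s. By continuity in $s$, $\langle \ell|\alphabet(d+1),\Wsig_s\rangle = 0$ for all $s$, a.s., for every $d$. Each $\ell|\alphabet(d+1)$ has degree at most $M-1$, so the induction hypothesis gives $\ell|\alphabet(d+1)=0$.

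\emph{Inductive step: the time letter and the empty word.} The Stratonovich integrals are now integrals of the zero process, so $0 = Y_t = \ell^\emptyword + \int_0^t \langle\ell|\eone,\Wsig_s\rangle\,\d s$ for all $t$. Setting $t=0$ gives $\ell^\emptyword = 0$. Differentiating in $t$ and using continuity gives $\langle \ell|\eone,\Wsig_t\rangle\equiv 0$, and induction again yields $\ell|\eone = 0$. Hence every coefficient of $\ell$ vanishes, so $\ell=0$.

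\emph{Main obstacle.} The delicate step is the passage from $Y\equiv 0$ to the vanishing of each Brownian integrand separately. This relies on the uniqueness of the semimartingale decomposition and on the fact that the quadratic variation of $\sum_d\int H^d\,\d W^{(d)}$ equals $\sum_d\int (H^d)^2\,\d s$, which uses the independence of the Brownian components. Both facts hold here because the integrands are continuous, and in fact square-integrable by Lemma~\ref{lemma: exponential_growth_level_tensor}. A secondary check is that the ``remove last letter'' identity for $\langle\ell,\Wsig_t\rangle$ matches the paper's convention for word coordinates, with the last letter corresponding to the outermost integral. This follows from the displayed iterated-integral formula for $\Wsig_t^\wb$.
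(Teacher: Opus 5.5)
Your proof is correct and follows essentially the same route as the paper. Both arguments strip the last letter, write $\langle \ell,\Wsig_t\rangle$ as $\ell^{\emptyword}$ plus a $\d t$-integral and Stratonovich integrals of $\langle \ell|\jb,\Wsig\rangle$, and use uniqueness of the semimartingale decomposition to push the vanishing down to shorter tensors. The paper does this via closure of the null space $\Nn$ under letter removal plus a longest-word argument; you do it by induction on degree, which makes the Itô correction $\tfrac12\sum_{\jb}\ell|_{\jb\jb}$ vanish before you treat the time letter. You are also right that the conclusion must be $\ell=0$ rather than $\ell=\emptyword$, since $\langle\emptyword,\Wsig_t\rangle=1$; the paper's statement and its final line $\Nn=\{\emptyword\}$ contain this slip.
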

\begin{proof}
    Let us define
    \begin{align}
        \Nn = \left\{ \ell \in T((\R^\Dbar)^*)\, :\, \langle \ell, \Wsig_t \rangle = 0\quad \forall t\in [0,T]\quad \Pb\text{-almost surely}\right\}\,.
    \end{align}
    Consequently, $\Nn$ forms a sub-vector space of $T((\R^\Dbar)^*)$. Consider some $\ell\in \Nn$. For a word $\vb$, let us denote
    \begin{equation}
        \ell\, |_{\vb} := \sum_{m=0}^\infty \sum_{\wb \in V_m} \ell^{\wb\, \vb}\, \wb\,.
    \end{equation}
    Moreover, let us denote $W^{(\alphabet(j+1))} = W^{(j)}$ for $j=1, \dots, D$. By Ito's formula for pairings with Brownian signatures (see Theorem 3.3 in \cite{jaber2024pathdependent} for the extended pairing)
    \begin{align}
        0&= \langle \ell, \Wsig_t \rangle\\
        &= \ell^{\emptyword} + \int_0^t \langle \ell\, |_{\eone}, \Wsig_u \rangle\, \d u + \sum_{\jb\in \{\etwo, \cdots, \eDbar\}} \int_0^t \langle \ell\, |_{\jb}, \Wsig_u \rangle\, \circ \d W_u^{(\jb)}\\
        &= \ell^{\emptyword} + \int_0^t \la \ell\, |_{\eone}  + \frac{1}{2} \sum_{\jb\in \{\etwo, \cdots, \eDbar\}} \ell\, |_{\jb\, \jb}, \Wsig_u \ra\, \d u + \sum_{\jb\in \{\etwo, \cdots, \eDbar\}} \int_0^t \langle \ell\, |_{\jb}, \Wsig_u \rangle\, \d W_u^{(\jb)}\,.
    \end{align}
    Thus, if $\ell\in \Nn$, then $\ell^{\emptyword}=0$, $\ell\, |_{\eone}  + \frac{1}{2} \sum_{\jb\in \{\etwo, \cdots, \eDbar\}} \ell\, |_{\jb\, \jb}\in \Nn$, and $\ell\, |_{\jb}\in \Nn$ for all $\jb\in \{\etwo, \cdots, \eDbar\}$. This implies that $\ell\, |_{\jb}\in \Nn$ for all $\jb\in \{\eone, \etwo, \cdots, \eDbar\}$, which in turn implies that $\ell\, |_{\vb} \in \Nn$ for any word $\vb$. Let $\vb$ be the longest word such that $\ell^\vb \neq 0$. Then, we would have $\ell\, |_{\vb} \in \Nn$; however, $\ell\, |_{\vb} = \ell^\vb\, \emptyword$, implying $\ell^\vb = 0$, a contradiction. Thus, $\Nn = \{ \emptyword\}$. 
\end{proof}
Using Lemma \ref{lemma: injectiveness of signature control}, we can show that the tensor representation of the cost functional inherits the strict convexity property from the original cost functional.
\begin{lemma}
\label{lemma:criterion_convexity}
   The cost functions $\langle \mfq(\cdot), \E[\Wsig_T] \rangle$ and $\langle \mfq^\lev(\cdot), \E[\Wsig_T] \rangle$ are strictly convex in $\left(T((\R^\Dbar)^*)\right)^K$.
\end{lemma}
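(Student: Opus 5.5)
The plan is to exploit the fact, established in Corollary~\ref{lemma: full cost functional as polynomial}, that both functions are quadratic polynomials in the coefficients of $\ctt$. For a quadratic (or merely convex quadratic-type) function $F$ on a vector space one has the exact identity
\begin{align}
F(\lambda \ctt_1+(1-\lambda)\ctt_2)=\lambda F(\ctt_1)+(1-\lambda)F(\ctt_2)-\lambda(1-\lambda)\,Q_F(\ctt_1-\ctt_2),
\end{align}
where $Q_F$ is the purely quadratic part of $F$. Hence strict convexity reduces to showing $Q_F(\delta)>0$ for every $\delta\in\big(T((\R^\Dbar)^*)\big)^K\setminus\{0\}$. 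I will identify $Q_F$ probabilistically and bound it below by the control-cost term.

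For the full functional $F=\langle \mfq(\cdot),\E[\Wsig_T]\rangle$, Proposition~\ref{prop: cost functional as linear functional} gives $F(\ctt)=J(\langle\ctt,\Wsig\rangle)$. By Corollary~\ref{cor: state process as linear functional of signature}, the map $\ctt\mapsto \st^{\ct}$ is affine. Its linear part sends $\delta$ to the solution $\delta\st$ of the homogeneous linear SDE (with $\st_0$, $b_0$, $\sigma_0$ set to zero) driven by $\delta\ct_t=\langle\delta,\Wsig_t\rangle$. Expanding $J$, the quadratic part is
\begin{align}
Q_F(\delta)=\E\Big[\int_0^T \big\{(\delta\st_t)^\top A(t)\,\delta\st_t+(\delta\ct_t)^\top B(t)\,\delta\ct_t\big\}\d t+(\delta\st_T)^\top E\,\delta\st_T\Big].
\end{align}
All expectations are finite by Lemma~\ref{lemma: exponential_growth_level_tensor}\ref{lemma: exponential_growth_level_tensor_2}, since $\delta\ct\in\mcA_\Sig$ and the state tensors lie in $\tensorExpspace$.

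For the truncated functional $F^\lev=\langle\mfq^\lev(\cdot),\E[\Wsig_T]\rangle$, all tensors involved are finite. The shuffle identity and $\E\langle\ell,\Wsig_T\rangle=\langle\ell,\E\Wsig_T\rangle$ then hold trivially. So $F^\lev(\ctt)$ equals the same LQ expression with $\st_t$ replaced by $Y_t:=\langle \stt^{\ctt,\lev},\Wsig_t\rangle$ (and $t^m/m!=\langle\eone^{\otimes m},\Wsig_t\rangle$). By Lemma~\ref{lemma: polynomial lemma 1}\ref{lemma: statement 1}, $\ctt\mapsto \stt^{\ctt,\lev}$ is affine. Its quadratic part $Q_{F^\lev}(\delta)$ therefore has the same form as above, with $\delta\st$ replaced by $\delta Y_t=\langle(\text{linear part of }\stt^{\cdot,\lev})(\delta),\Wsig_t\rangle$.

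In both cases $A(t)\ge0$ and $E\ge0$, so
\begin{align}
Q(\delta)\ \ge\ \E\Big[\int_0^T(\delta\ct_t)^\top B(t)\,\delta\ct_t\,\d t\Big]\ \ge\ \beta\,\E\Big[\int_0^T|\langle\delta,\Wsig_t\rangle|^2\d t\Big].
\end{align}
Here $\beta>0$ is a uniform lower bound for the eigenvalues of $B(t)$ on $[0,T]$, which exists by continuity and compactness. The key step, and the only real obstacle, is to show that the right-hand side is strictly positive when $\delta\neq0$. Pick a component $k$ with $\delta^{(k)}\neq0$. By Lemma~\ref{lemma: injectiveness of signature control} (read as: the zero tensor is the only finite tensor whose pairing vanishes identically), the event that $t\mapsto\langle\delta^{(k)},\Wsig_t\rangle$ is not identically zero has positive probability. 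On that event the path is continuous, so it is nonzero on a time set of positive Lebesgue measure. Hence $\E\int_0^T|\langle\delta^{(k)},\Wsig_t\rangle|^2\d t>0$, which gives $Q(\delta)>0$ and thus strict convexity of both functions.
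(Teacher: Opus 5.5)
Your proof is correct and takes essentially the same route as the paper. Both rest on three ingredients: the affine dependence of the (truncated) state tensor on $\ctt$, the linear-quadratic structure of the criterion, and strictness coming from $B(t)>0$ combined with Lemma~\ref{lemma: injectiveness of signature control}. Your version is more explicit than the paper's sketch in two places the paper leaves implicit: you identify $\langle \mfq^\lev(\ctt), \E[\Wsig_T]\rangle$ as the LQ cost of the pseudo-state $\langle \stt^{\ctt,\lev}, \Wsig\rangle$, and you use path continuity to turn the injectivity lemma into non-vanishing of the control difference in $L^2(\d\Pb\otimes\d t)$.
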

\begin{proof}
    The convexity of $\langle \mfq(\cdot), \E[\Wsig_T] \rangle$ follows from the convexity of the actual cost function and Lemma \ref{lemma: injectiveness of signature control}, so we only need to show the convexity of $\langle \mfq^\lev(\cdot), \E[\Wsig_T] \rangle$. Let us consider any $\ctt, \tilde \ctt\in \left(T((\R^\Dbar)^*)\right)^K$ such that $\ctt \neq \tilde \ctt$. Define $\ct:=\la \ctt, \Wsig\ra$ and $\tilde \ct:=\la \tilde \ctt, \Wsig\ra$. From Lemma \ref{lemma: injectiveness of signature control}, we have $\ct^{(k)} \neq \tilde \ct^{(k)}$. Denote by $\st$ and $\tilde \st$ the controlled systems associated with $\ct$ and $\tilde \ct$ respectively and take $\stt$, $\tilde{\stt}$ as their tensor respective representation, i.e.  $\st = \langle \stt, \Wsig \rangle$ and  $\tilde \st = \langle \tilde{\stt}, \Wsig \rangle$. Let us take $s\in [0,1]$ and define $$\hat \ctt := s\, \ctt + (1-s)\, \tilde \ctt, \quad \hat \ct := s\, \ct + (1-s)\,\tilde \ct\,.$$ Then $\hat \ct^{(k)} = \la \hat \ctt^{(k)}, \Wsig\ra$ for all $k=1,\cdots,K$. Denote by $\hat \st$ the controlled system associated with $\hat \ct$ and $\hat{\stt}$ as its tensor representation, i.e. $\hat \st = \langle \hat{\stt}, \Wsig \rangle$. It follows that $$\hat \st = s\, \st + (1-s)\, \tilde \st \quad \text{and}\quad \hat{\stt} = s\, \stt + (1-s)\, \tilde{\stt} \,.$$ By taking a truncation up to level $\lev$, we have $$\hat{\stt}^{\lev} = s\, \stt^{\lev} + (1-s)\, \tilde{\stt}^{\lev}\,. $$ From the linear-quadratic form of the performance criterion, we have
    \begin{align}
        \langle \mfq^\lev(\hat \ctt), \E[\Wsig_T] \rangle \leq s\, \langle \mfq^\lev(\ctt), \E[\Wsig_T] \rangle + (1-s)\, \langle \mfq^\lev(\tilde \ctt), \E[\Wsig_T] \rangle \,.
    \end{align}
    Moreover, as $B(t)$ is continuous and strictly positive definite, the inequality above is strict if $s\in(0,1)$. Thus, $\langle \mfq^\lev(\cdot), \E[\Wsig_T] \rangle$ is strictly convex.
\end{proof}

\subsection{Convergence of the truncated problem and density results}

In this section, we will prove that minimising the cost functional over $\mcA_\Sig$ is the same as minimising it over the larger space $\mcA$. Moreover, we are going to propose a truncation method to approximate the infinite-dimensional optimisation problem by a finite-dimensional one while preserving convexity and consistency. The following theorem sums up the main findings in this section.

\begin{theorem}
\label{theorem: main result}
    For sufficiently large truncation levels $\lev,M$, the minimization of the truncated cost functional $\langle \mfq^\lev(\ctt), \E[\Wsig_T] \rangle$ over $\ctt \in \big(T^M((\R^\Dbar)^*)\big)^K$ is a proxy for the original linear quadratic optimization problem in the sense that
    \begin{equation}
    \label{eq: main result 1}
        \lim_{M\to \infty} \lim_{\lev \to \infty} \inf_{\ctt \in \big(T^M((\R^\Dbar)^*)\big)^K} \langle \mfq^\lev(\ctt), \E[\Wsig_T] \rangle = \inf_{\ct \in \mcA} J(\ct)\,,
    \end{equation}
    and
    \begin{equation}
    \label{eq: main result 2}
        \lim_{M\to \infty} \lim_{\lev \to \infty} \langle \ctt^{\lev,M},\Wsig\rangle = u^* \text{ weakly in } L^2(\d \Pb \otimes \d t)
    \end{equation}
    where $\ctt^{\lev,M}$ denotes the minimizer of the truncated problem and $u^*$ is the optimal control in the linear quadratic optimization problem.
\end{theorem}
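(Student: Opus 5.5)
We prove the two limits in two layers: first the inner limit $\lev\to\infty$ for fixed $M$, then the outer limit $M\to\infty$. Throughout, write $J_M:=\inf_{\ctt\in(T^M((\R^\Dbar)^*))^K}\la\mfq(\ctt),\E[\Wsig_T]\ra$ and $J_M^\lev$ for the corresponding infimum with $\mfq^\lev$.

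\emph{Inner limit.} Fix $M$. The space $(T^M((\R^\Dbar)^*))^K$ is finite-dimensional. By Corollary~\ref{lemma: full cost functional as polynomial}, both $f^\lev(\ctt):=\la\mfq^\lev(\ctt),\E[\Wsig_T]\ra$ and $f(\ctt):=\la\mfq(\ctt),\E[\Wsig_T]\ra$ are quadratic polynomials there, and Lemma~\ref{lemma:criterion_convexity} makes them strictly convex. Proposition~\ref{prop: cost functional as linear functional} gives $f^\lev\to f$ pointwise. A sequence of quadratic polynomials on a finite-dimensional space converging pointwise converges coefficientwise: evaluate at finitely many unisolvent points. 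So the quadratic forms $Q^\lev$, linear parts and constants converge to those of $f$. Strict convexity of $f$ means its quadratic part $Q$ is positive definite, and by coefficient convergence $Q^\lev\geq \tfrac12 Q$ for $\lev$ large. Then the unique minimizers $\ctt^{\lev,M}=-(Q^\lev)^{-1}b^\lev/2$ converge to the unique minimizer $\ctt^{M}$ of $f$, and $J^\lev_M\to J_M$.

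Since $\ctt\mapsto\la\ctt,\Wsig\ra$ is linear from a finite-dimensional space into $L^2(\d\Pb\otimes\d t)$ (entries of $\Wsig$ are square integrable), it is continuous. Hence $\la\ctt^{\lev,M},\Wsig\ra\to\la\ctt^M,\Wsig\ra=:u^M$ strongly in $L^2$. Moreover $J_M=\inf_{u\in\mcA_\Sig^M}J(u)$, where $\mcA_\Sig^M$ denotes signature controls of level at most $M$.

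\emph{Outer limit.} The sets $\mcA^M_\Sig$ increase in $M$, so $J_M$ is nonincreasing and bounded below by $J^*:=\inf_{\mcA}J$. To identify the limit, we invoke the density of $\mcA_\Sig$ in $\mcA$ for the $L^2(\d\Pb\otimes\d t)$ norm, from \cite{ceylan2025universality} as announced in the introduction. We also need continuity of $J$ in this norm. The map $u\mapsto X^u$ is affine and bounded from $L^2(\d\Pb\otimes\d t)$ into $\{X:\E\sup_t|X_t|^2<\infty\}$, by standard linear SDE estimates with Gronwall and BDG. Combined with the boundedness of $A,B,C,D,E,G$, this shows that $J$ is a continuous quadratic functional on $\mcA$.

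Given $\varepsilon>0$, choose $u^\varepsilon\in\mcA$ with $J(u^\varepsilon)<J^*+\varepsilon$, and approximate it by a signature control to get $\lim_M J_M\le J^*+2\varepsilon$. This yields \eqref{eq: main result 1}. The optimal control $u^*$ exists and is unique because $J$ is strictly convex (since $B>0$), coercive and continuous on the Hilbert space $\mcA$.

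For \eqref{eq: main result 2}, the sequence $u^M$ is a minimizing sequence. Coercivity, $J(u)\ge c\|u\|^2-C$ from $B(t)\ge cI$ uniformly on $[0,T]$, makes it bounded in $L^2$, so every subsequence has a weakly convergent further subsequence, with limit $\bar u\in\mcA$. Since $\mcA$ is a closed subspace, it is weakly closed. Convex continuous functionals are weakly lower semicontinuous, so $J(\bar u)\le\liminf J(u^M)=J^*$, whence $\bar u=u^*$ by uniqueness, and the whole sequence converges weakly.

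In fact strong convergence holds. $J$ is quadratic with a Hessian bounded below by $c\|\cdot\|^2$, so $J(u^M)-J^*\ge c\|u^M-u^*\|^2$, using the first-order optimality condition at $u^*$.

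\emph{Main obstacle.} The most delicate step is importing the density result in exactly the needed form. We need density in $L^2(\d\Pb\otimes\d t)$ of progressive processes by processes $t\mapsto\la\ctt,\Wsig_t\ra$ with $\ctt$ in the non-extended tensor algebra, for the Brownian filtration. If the filtration $\mbF$ is larger than the one generated by $W$, we would first project onto the Brownian filtration, which does not increase the cost for this LQ structure. I would attempt to verify this via the cited universality theorem, or alternatively via Itô chaos expansion combined with polynomial approximation of Wiener functionals.
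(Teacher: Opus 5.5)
Your proof follows the paper's argument. For $L\to\infty$ you use coefficientwise convergence of the strictly convex quadratic polynomials on the finite-dimensional space $\big(T^M((\R^\Dbar)^*)\big)^K$. For $M\to\infty$ you use density of signature controls together with continuity of $J$. For the weak limit you use $L^2$-boundedness, weak lower semicontinuity (which is what the paper's Banach--Alaoglu/Mazur argument establishes) and uniqueness of $u^*$. Beyond the paper, your bound $J(u^M)-J^*\ge c\|u^M-u^*\|^2$, which follows from the first-order condition at $u^*$ and $B\ge cI$, correctly upgrades the conclusion to strong $L^2(\d\Pb\otimes\d t)$ convergence. Your remark about projecting onto the Brownian filtration also addresses a point the paper glosses over when it applies the density theorem to a general filtration $\mbF$.
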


To prove Theorem~\ref{theorem: main result}, first we need the following result from \cite{ceylan2025universality} on the $L^2$-universality of linear functionals acting on Brownian signatures.
\begin{theorem}
\label{theorem: density result}
    For every $\ct \in \mcA$, there exists $(\ctt_m^{(k)})_{m=1}^\infty \subset T((\R^\Dbar)^*)$, $k=1,..., K$, such that
    \begin{equation}
        \lim_{m \to \infty} \max_{k=1,\dots, K} \E \left [ \int_0^T \big|\ct_t^{(k)} - \langle \ctt_m^{(k)}, \Wsig_t \rangle\big|^2\, \d t\right] = 0\,.
    \end{equation}
\end{theorem}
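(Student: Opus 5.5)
The plan is a Hilbert-space orthogonality argument in $\mathcal H:=L^2(\d\Pb\otimes\d t)$ restricted to progressively measurable processes. It is enough to treat one component $k$ at a time. Let $\mathcal H_\Sig\subset\mathcal H$ be the closure of $\{\langle\ell,\Wsig_\cdot\rangle:\ell\in T((\R^\Dbar)^*)\}$; this set sits inside $\mathcal H$ by Lemma~\ref{lemma: exponential_growth_level_tensor}. The theorem is equivalent to $\mathcal H_\Sig=\mathcal H$, so I will show that any progressive $v\in\mathcal H$ with $v\perp\mathcal H_\Sig$ vanishes. I will assume, as is implicit for this statement, that $\mbF$ is the augmented filtration generated by $W$. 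Otherwise the claim fails, since signature functionals are $W$-adapted.

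\textbf{Step 1: identify explicit elements of $\mathcal H_\Sig$.} Because the integrand is deterministic, Itô and Stratonovich integrals agree, and the definition of the signature gives
\begin{equation*}
\frac{t^m}{m!}=\langle\eone^{\otimes m},\Wsig_t\rangle,\qquad \int_0^t s^m\,\d W^{(d)}_s=m!\,\langle \eone^{\otimes m}\alphabet(d+1),\Wsig_t\rangle .
\end{equation*}
The shuffle identity $\langle\ell_1,\Wsig_t\rangle\langle\ell_2,\Wsig_t\rangle=\langle\ell_1\shpr\ell_2,\Wsig_t\rangle$ then shows the following. For every polynomial $P$ and all $\R^D$-valued polynomials $\psi_1,\dots,\psi_r$ in time, the process
\begin{equation*}
t\mapsto P\Big(t,\int_0^t\psi_1(s)\cdot\d W_s,\dots,\int_0^t\psi_r(s)\cdot\d W_s\Big)
\end{equation*}
is exactly a linear signature functional. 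Hence it lies in $\mathcal H_\Sig$.

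\textbf{Step 2: pass to exponentials.} Fix a polynomial $h$ and a polynomial $\psi$, and set $Z_t:=\int_0^t\psi\cdot\d W$. The partial sums $h(t)\sum_{j\le n}Z_t^j/j!$ belong to $\mathcal H_\Sig$ by Step 1. They converge in $\mathcal H$ to $h(t)e^{Z_t}$ by dominated convergence, using the dominating function $|h(t)|e^{|Z_t|}$. This function is in $L^2(\d\Pb\otimes\d t)$ because $Z_t$ is Gaussian with variance bounded uniformly in $t\in[0,T]$.

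Next I extend $\psi$ from polynomials to all of $L^2([0,T];\R^D)$. Polynomials are dense there. Moreover, if $\psi_n\to\psi$ in $L^2$, then $e^{Z^{\psi_n}_t}\to e^{Z^\psi_t}$ in $\mathcal H$: by Gaussianity, $\E[(e^{a}-e^{b})^2]$ is an explicit continuous function of the variances and covariance, uniformly in $t$. So $h(t)\exp(\int_0^t\psi\cdot\d W)\in\mathcal H_\Sig$ for all polynomials $h$ and all $\psi\in L^2$.

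\textbf{Step 3: orthogonality forces $v=0$.} Take $v\perp\mathcal H_\Sig$. Then for every polynomial $h$ and every $\psi$,
\begin{equation*}
\int_0^T h(t)\,\E\Big[v_t\,e^{\int_0^t\psi\cdot\d W}\Big]\d t=0 .
\end{equation*}
The map $t\mapsto\E[v_te^{Z^\psi_t}]$ is in $L^1$ (in fact $L^2$) by Cauchy--Schwarz, and polynomials are dense. Hence this map vanishes for a.e.\ $t$. I then take a countable $L^2$-dense family $\{\psi_i\}$ and intersect the corresponding full-measure sets. By the $L^2$ continuity from Step 2, this yields a single null set outside of which $\E[v_te^{\int_0^t\psi\cdot\d W}]=0$ for all $\psi\in L^2$. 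For such $t$, $v_t\in L^2(\F_t)$ is orthogonal to all exponentials of Wiener integrals over $[0,t]$. These exponentials are total in $L^2(\F_t)$ when $\F_t$ is the Brownian filtration; this is the standard Wiener chaos or Girsanov-exponential density. Therefore $v_t=0$ a.s.\ for a.e.\ $t$, that is, $v=0$ in $\mathcal H$.

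I expect the main obstacle to be the bookkeeping in Steps 2--3. One part is making sure the exponential limits are taken in $L^2(\d\Pb\otimes\d t)$ uniformly in $t$, which needs the Gaussian moment bounds. The other part is the "for a.e.\ $t$, simultaneously for all $\psi$" step, which is handled by the countable dense family. The algebraic part, Step 1, is immediate from the shuffle property and the explicit form of low-order signature entries.
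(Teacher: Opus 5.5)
Your argument is correct, but it takes a genuinely different route from the paper. The paper gives no proof of its own: it simply invokes Corollary~4.3(i) of \cite{ceylan2025universality}, remarking that the cited result even yields density in $L^p$ for $p>1$. You instead prove the statement directly by duality in the Hilbert space of progressive processes in $L^2(\d\Pb\otimes\d t)$, in three moves. First, the words $\eone^{\otimes m}$ and $\eone^{\otimes m}\alphabet(d+1)$, together with the shuffle identity, generate all polynomials in $t$ and in Wiener integrals with polynomial kernels. Second, Gaussian moment bounds let you pass to a closure containing $h(t)\exp(\int_0^t\psi\cdot\d W)$. Third, the totality of Wiener exponentials in $L^2(\F_t)$ forces the orthogonal complement to vanish; you apply it for a.e.\ fixed $t$ after your countable dense reduction, using that $v_t$ is $\F_t$-measurable. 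Your route is elementary and self-contained. It also shows that a much smaller shuffle subalgebra, the one generated by time and polynomial-kernel Wiener integrals, is already dense. Finally, it makes explicit a hypothesis the paper leaves implicit (its setup only requires a filtration satisfying the usual conditions): $\mbF$ must be the augmented Brownian filtration, and without this the statement is false. The citation buys brevity and the stronger $L^p$ statement. Your orthogonal-complement argument is tailored to $p=2$, though that is all the paper later uses. When writing it up, also discard the null set of times where $\E[v_t^2]=\infty$. Only then is the continuity of $\psi\mapsto\E[v_t\,e^{\int_0^t\psi\cdot\d W}]$ available at every remaining $t$.
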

\begin{proof}
    This follows directly from (\cite{ceylan2025universality}, Corollary 4.3.(i)). In fact, the density result still holds in the space of $L^p$-integrable processes for $p>1$.

    \,
\end{proof}
Employing Theorem \ref{theorem: density result}, we have the following corollary.
\begin{corollary}
\label{prop: infimum over signature controls}
    We have that $\inf_{\ct \in \mcA} J(\ct) = \inf_{\ct \in \mcA_\Sig} J(\ct)\,.$
\end{corollary}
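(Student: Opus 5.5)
Since $\mcA_\Sig\subset\mcA$, the inequality $\inf_{\ct \in \mcA} J(\ct) \le \inf_{\ct \in \mcA_\Sig} J(\ct)$ is immediate. For the reverse inequality I fix an arbitrary $\ct\in\mcA$ with $J(\ct)<\infty$ and use Theorem~\ref{theorem: density result} to obtain signature controls $\ct_m := \la \ctt_m,\Wsig\ra\in\mcA_\Sig$ with $\ct_m\to\ct$ in $L^2(\d\Pb\otimes\d t)$. It then suffices to prove that $J$ is continuous along this sequence, i.e.\ $J(\ct_m)\to J(\ct)$. This gives $\inf_{\mcA_\Sig}J\le J(\ct)$, and taking the infimum over $\ct\in\mcA$ yields the claim.

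The continuity of $J$ on $\mcA$ with respect to $L^2(\d\Pb\otimes\d t)$ rests on a stability estimate for the linear SDE~\eqref{eq:multidim_system}. Set $\delta X := \st^{\ct_m}-\st^{\ct}$. Because the system is linear, $\delta X$ solves the same equation with $b_0=0$, $\sigma_0=0$, zero initial condition, and drift forcing $b_1(\ct_m-\ct)$. The Stratonovich term $\sigma_2 X\circ \d W$ can be converted to It\^o form; this only adds a constant linear drift in $X$, so $\delta X$ satisfies a linear It\^o SDE whose coefficients are affine in $\delta X$ and whose forcing is $b_1(\ct_m-\ct)$. Burkholder--Davis--Gundy, Cauchy--Schwarz for the $\d t$ integral, and Gronwall then give
\begin{equation}
\E\Big[\sup_{t\in[0,T]}|\delta X_t|^2\Big]\le C\,\E\Big[\int_0^T|\ct_{m,t}-\ct_t|^2\,\d t\Big],
\end{equation}
with $C$ depending only on $T$ and the coefficients. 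The same argument with $\delta X$ replaced by $\st^\ct$ shows $\E[\sup_t|\st^\ct_t|^2]<\infty$ for every $\ct\in\mcA$, so $J$ is finite on all of $\mcA$.

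Finally, $J$ is a sum of quadratic and linear terms in $(\st^\ct,\ct)$ with bounded continuous coefficients $A,B,C,D$ on $[0,T]$ and constant $E,G$. Each quadratic difference factors as, for example, $X_m^\top A X_m - X^\top A X = \delta X^\top A X_m + X^\top A\,\delta X$. By Cauchy--Schwarz its expectation is bounded by $\|A\|_\infty\,\|\delta X\|_{L^2}\,(\|X_m\|_{L^2}+\|X\|_{L^2})$, and $\|X_m\|_{L^2}$ stays bounded by the previous paragraph. The linear terms are handled directly in the same way, and the terminal terms use the $\sup_t$ bound at $t=T$. Hence $J(\ct_m)\to J(\ct)$.

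The only genuinely technical step is the SDE stability estimate, specifically the Stratonovich-to-It\^o conversion for the multiplicative noise $\sigma_2$. This is standard because all coefficients are constant, so I do not expect any real obstacle there.
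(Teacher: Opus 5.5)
Your proposal is correct and follows essentially the same route as the paper. You get one inequality from $\mcA_\Sig\subset\mcA$, approximate an arbitrary $\ct\in\mcA$ by signature controls via Theorem~\ref{theorem: density result}, and conclude $J(\ct_m)\to J(\ct)$ from the stability estimate $\E[\sup_t|\st^{\ct_m}_t-\st^{\ct}_t|^2]\to 0$ together with the linear-quadratic structure of the cost; your write-up just makes explicit the Stratonovich-to-It\^o conversion, the BDG--Gronwall bound and the Cauchy--Schwarz factorisation that the paper leaves to a footnote and the phrase ``from the linear-quadratic nature of the cost functional.''
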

\begin{proof}
    Let us choose any $\ct\in \mcA$. Then we can choose $(\ctt_m^{(k)})_{m=1}^\infty \subset \mcA_\Sig$ satisfying the statement of Theorem \ref{theorem: density result} and define $\ct_m$ by $\ct_{m, t}^{(k)}:= \langle \ctt_m^{(k)}, \Wsig_t\rangle$. Due to the linear drift and volatility of $\st$, we obtain
    \begin{equation}
        \lim_{m\to \infty} \E \left[ \sup_{0\leq s\leq T} |\st_s^\ct - \st_s^{\ct_m}|^2\right] = 0\,,\footnote{This can be proven using standard techniques involving Cauchy-Schwarz, Itô isometry, Burkholder-Davis-Gundy, and Gronwall inequalities.}
    \end{equation}
    and thus from the linear-quadratic nature of the cost functional, we have $J(\ct_m) \to J(\ct)$. Thus, the set of real numbers $\{ J(\ct)\, |\, \ct \in \mcA_\Sig \}$ is dense in $\{ J(\ct)\, |\, \ct \in \mcA \}$, and the equality follows.
\end{proof}

Now, we are ready to prove our main result.

\begin{proof}[Proof of Theorem \ref{theorem: main result}]
    Let us fix $M\in \N$. From Proposition \ref{prop: cost functional as linear functional}, Lemma \ref{lemma: full cost functional as polynomial} and \ref{lemma:criterion_convexity}, if one restricts oneself to the space $\big(T^M((\R^\Dbar)^*)\big)^K$, the coefficients  of the (strictly convex) quadratic polynomial $\langle \mfq^\lev(\cdot), \Wsig_T \rangle$  converge for $\lev\to \infty$ to those of the strictly convex quadratic polynomial $\langle \mfq(\cdot), \E[\Wsig_T] \rangle$. The same is then true also for the respective infima, i.e.
    $$ \lim_{\lev \to \infty} \inf_{\ctt \in \big(T^M((\R^\Dbar)^*)\big)^K} \langle \mfq^\lev(\ctt), \E[\Wsig_T] \rangle = \inf_{\ctt \in \big(T^M((\R^\Dbar)^*)\big)^K} \langle \mfq(\ctt), \E[\Wsig_T] \rangle\,.$$
    
    By taking $M\to \infty$, applying Proposition \ref{prop: cost functional as linear functional} and Corollary \ref{prop: infimum over signature controls}, we obtain
    
    \begin{align}
         \lim_{M \to \infty} &\lim_{\lev \to \infty} \inf_{\ctt \in \big(T^M((\R^\Dbar)^*)\big)^K} \langle \mfq^\lev(\ctt), \E[\Wsig_T] \rangle 
         = \lim_{M \to \infty} \inf_{\ctt \in \big(T^M((\R^\Dbar)^*)\big)^K} \langle \mfq(\ctt), \E[\Wsig_T] \rangle\\
         &= \inf_{\ctt \in \big(T((\R^\Dbar)^*)\big)^K} \langle \mfq(\ctt), \E[\Wsig_T] \rangle
         = \inf_{\ct \in \mcA_\Sig} J(\ct)
         = \inf_{\ct \in \mcA} J(\ct)\,.
    \end{align}
    In fact, when $L \to \infty$, it follows that even the minimiser $\ctt^{\lev, M}$ of $\langle \mfq^\lev(\cdot), \Wsig_T \rangle$ over $\big(T^M((\R^\Dbar)^*)\big)^K$  converges to $\ctt^M$, the minimiser of $\langle \mfq(\cdot), \Wsig_T \rangle$ over this domain.

    Let us argue why $\ct^M:=\langle \ctt^M,\Wsig\rangle$ converges weakly to $\ct^*$, the minimiser of $J$ over $\mcA$. From the above, we deduce that $J(\ct^M)\to \inf_{\ct \in \mcA} J(\ct)$. In particular, $(\ct^M)_{M=1,2,\dots}$ is bounded in $L^2(\Pb \otimes dt)$. Thus, by the Banach-Alaoglu theorem, there exists a weak accumulation point $\tilde{\ct}^*$ and, by Mazur's theorem, we can find $\tilde{\ct}^M \in \mathrm{conv}(\ct^M,\ct^{M+1},\dots)$, $M=1,2,\dots$, converging to $\tilde{\ct}^*$ in the strong $L^2(\Pb \otimes dt)$-sense. By standard stability results of linear SDEs, the cost functional is continuous with respect to the latter convergence and so
    \begin{align}
        J(\tilde{\ct}^*)=\lim_{M} J(\tilde{\ct}^M) \leq \lim_M J(\ct^M)=\inf_{\ct \in \mcA} J(\ct),
    \end{align}
     where the estimate is due to the convexity of $J$. It follows that $\tilde{\ct}^*$ minimises $J$ over~$\mcA$. By uniqueness of this minimiser,  $\ct^M$ thus converges weakly in $L^2$ to the minimiser of $J$ over $\mcA$.
\end{proof}
   
\subsection{Beyond Brownian noise}

Observe that the extended tensors $\stt^\ctt$ (associated with the controlled state) and $\mfq(\ctt)$ (associated with the cost function) do not depend on the driving noise $W$. As such, heuristically, we may replace the driving noise in controlled linear systems \eqref{eq:multidim_system} with a general process $Y$, such as a fractional Brownian motion, with signature $\Ysig$ and hope that a similar result to Theorem \ref{theorem: main result} would still hold, i.e.
\begin{equation}
    \lim_{M\to \infty} \lim_{\lev \to \infty} \inf_{\ctt \in \big(T^M((\R^\Dbar)^*)\big)^K} \langle \mfq^\lev(\ctt), \E[\Ysig_T] \rangle \overset{?}{=} \inf_{\ct \in \mcA} J(\ct)\,.
\end{equation}
Here, there are two main challenges to establish this broader result:
\begin{enumerate}
    \item It is now unclear whether the process $\langle \stt^\ctt, \Ysig\rangle$ is even well-defined; we need the truncated pairing $\langle \big(\stt^\ctt\big)^{\leq \lev}, \Ysig\rangle$ to converge to the process $\st^\ct$ in some sense, and the latter process also needs to have a proper meaning. But our result (Theorem \ref{theorem: main result}) to this effect makes essential use of the Brownian motion and its signature.
    \item There is currently limited $L^p$-universality result for linear functionals of \textit{classical} signatures beyond Brownian motion. Such a result does exist for \textit{robust} signatures $\lambda(\Ysig)$ (see \cite{bayer2025primal}); however, we are not aware of a way to find a tensor $\mfq(\ctt)$ such that $\langle \mfq(\ctt), \lambda(\Ysig) \rangle=J(\ct)$ for $\ct=\langle \ctt, \E[\lambda(\Ysig)]\rangle$. Recently, there is a new $L^p$-universality result by \cite{chevyrev2026orthogonal} for linear functionals of a class of rough paths which includes Gaussian and Markovian rough paths.
\end{enumerate}

\section{Numerical experiments}
\label{sec:numerics}

In what follows, we study the one-dimensional case ($N=D=K=1)$ with two types of noise: (i) Brownian motion, and (ii) fractional Brownian motion with Hurst index $H=1/4$. We also consider state truncation levels $\lev$ up to $5$ and control truncation levels $M$ up to $3$; as we will see, this is enough for our problem. To work with signatures, we employ  the ``iisignature'' package by \cite{iisignature} and source codes from \cite{arribas2020optimal, cartea2022double}. Additionally, we utilise the ``fbm'' package to simulate fractional Brownian motion and the ``differint'' package by \cite{adams2019differint} to perform fractional calculus operations.\footnote{ The code is publicly  available in \href{https://github.com/muhammadalifaqsha/optimal-control-with-signatures}{muhammadalifaqsha}'s GitHub.}

\subsection{Brownian noise}

In the Brownian case, we set  $\st_0 = 10$, $A(t) = C(t) = D(t) = G =0$, and the remaining model parameters of the state dynamics and cost functional to be one. By Fawcett's formula (see \cite{fawcett2003problems, lyons2004cubature}), we utilise the following closed-form equation for the expected Brownian signature,
\begin{align}
    \E [\Wsig_T] = \sum_{n=0}^\infty \frac{T^n}{n!}\, \left(\eone + \frac{1}{2}\,  \etwo \otimes \etwo \right)^{\otimes n}\,.
\end{align}
Using the above formula, we are able to calculate all coefficients of the quadratic polynomial explicitly for the truncated problem. For $\ctt\in T^M((\R^\Dbar)^*)$, let us write
\begin{align}
    \langle \mfq^\lev(\ctt), \E [\Wsig_T]\rangle = \left(\sum_{\substack{\vb_1 \in V_{m_1}, \vb_2 \in V_{m_2}\\ m_1 + m_2\leq M}} c^{\vb_1, \vb_2}\, \ctt^{\vb_1}\, \ctt^{\vb_2}\right) + \left(\sum_{\substack{\vb \in V_m\\ m\leq M}} c^\vb\, \ctt^{\vb}\right) + c_0.
\end{align}
Suppose that we have a black-box function that evaluates $\langle \mfq^\lev(\cdot), \E [\Wsig_T]\rangle$. It is easy to check that
\begin{align}
    c_0 &= \langle \mfq^\lev(\emptyword), \E [\Wsig_T]\rangle\,, \quad
    c^{\vb, \vb} = \frac{\langle \mfq^\lev(\vb), \E [\Wsig_T]\rangle + \langle \mfq^\lev(-\vb), \E [\Wsig_T]\rangle}{2} - c_0\,,\\
    c^{\vb} &= \frac{\langle \mfq^\lev(\vb), \E [\Wsig_T]\rangle - \langle \mfq^\lev(-\vb), \E [\Wsig_T]\rangle}{2}\,,\\
    c^{\vb_1, \vb_2} &= \frac{\langle \mfq^\lev(\vb_1 + \vb_2), \E [\Wsig_T]\rangle - c^{\vb_1, \vb_1} - c^{\vb_2, \vb_2} - c_0}{2}\,\quad \forall \vb_1\neq \vb_2\,.
\end{align}

After obtaining the polynomial coefficients, we apply first-order condition to minimise $\langle \mfq^\lev(\cdot), \E [ \Wsig_T]\rangle$, thus obtaining the coefficients of the minimizer $\ctt^{\lev, M}$. Afterwards, we simulate $\ct^{\lev, M} = \langle \ctt^{\lev, M}, \Wsig\rangle$ a total of $20,000$ times by partitioning the time horizon into $1,000$ equally-spaced intervals. As benchmark, we compute the actual optimal control (obtained by solving the HJB equation) and simulate the performance using  the same randomness as in the signature approach. The performance of the strategies is reported in the first and second panel of Figure \ref{fig:performance_diff}.

\begin{figure}[ht]
    \centering
    \includegraphics[width=0.49\linewidth]{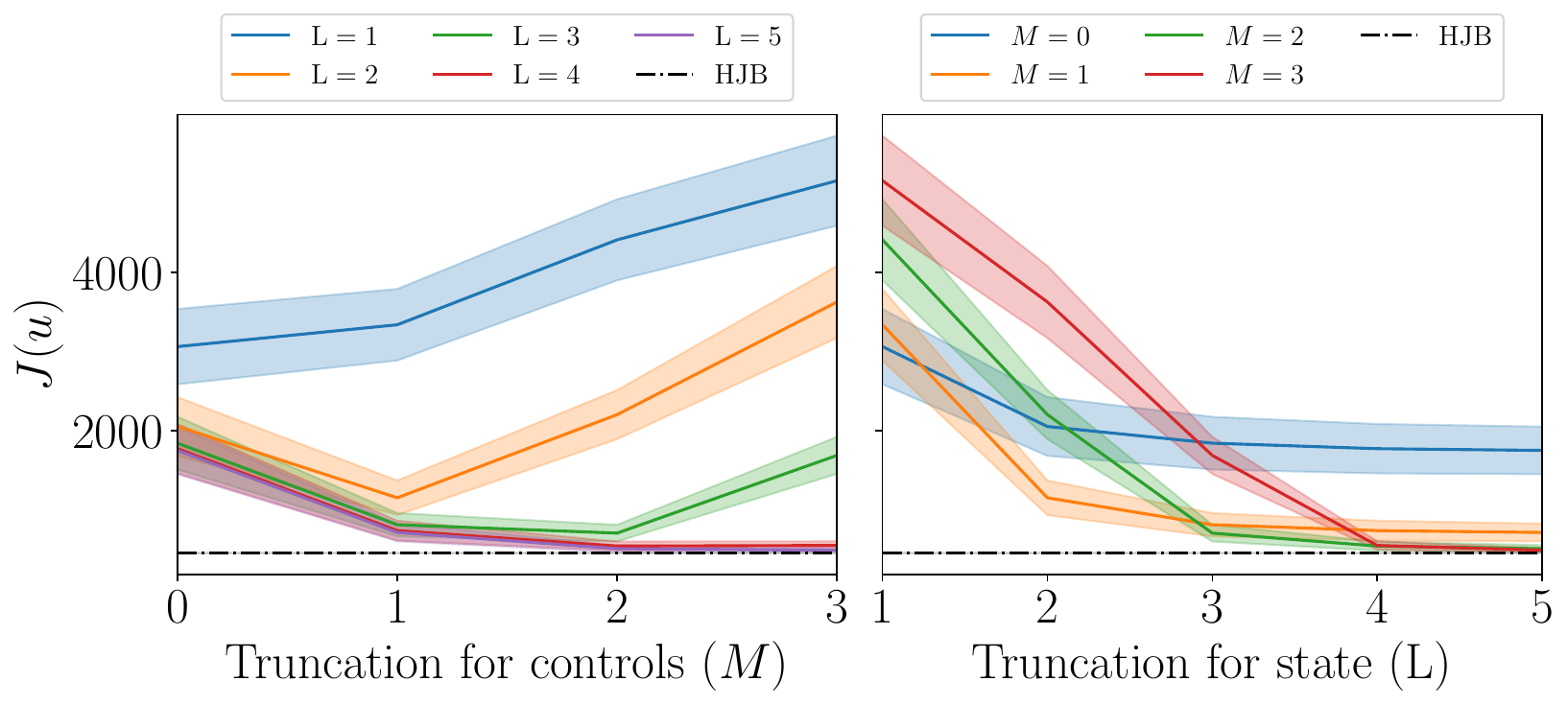}
    \includegraphics[width=0.49\linewidth]{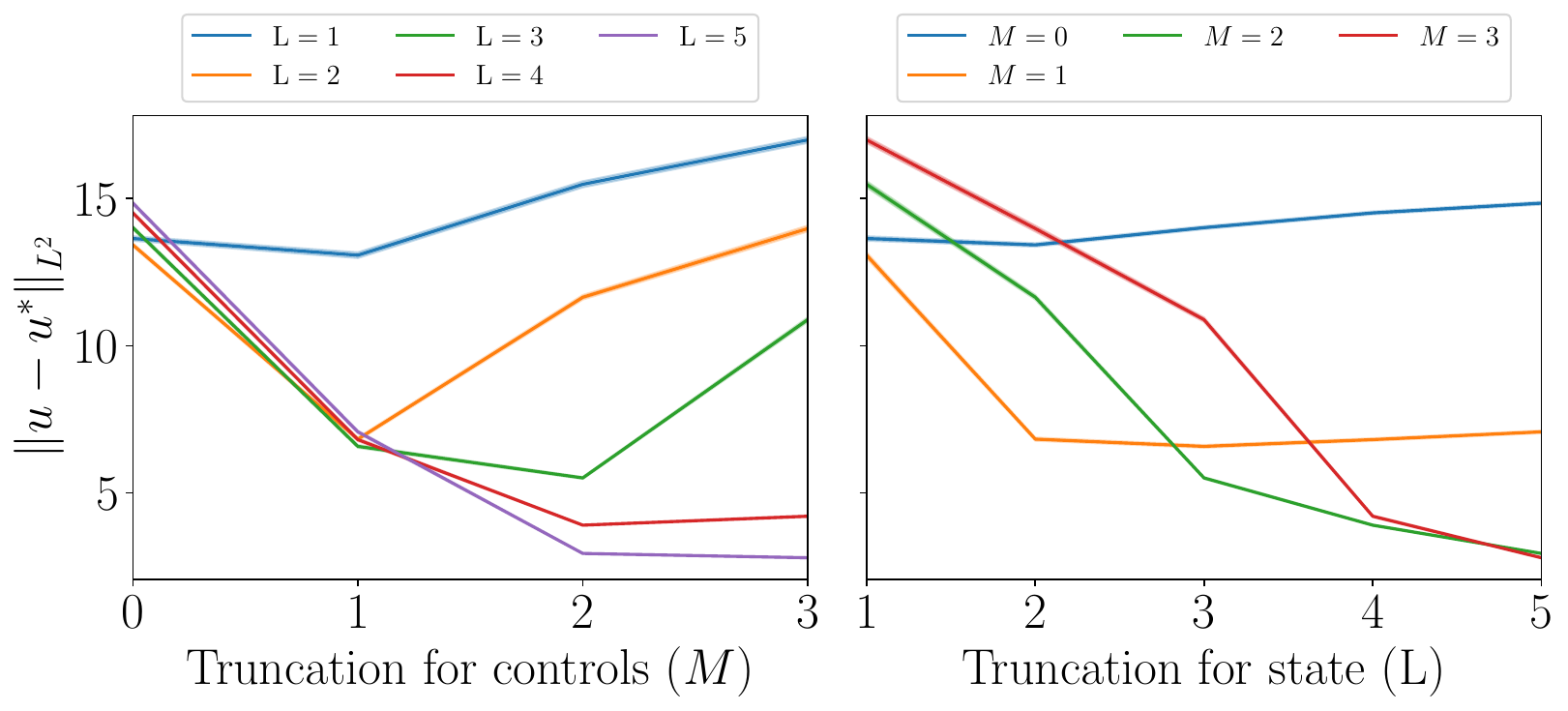}
    \caption{First and second panel: Estimation of $J(\ct)$ with 95\% confidence interval (in this case $J(u^*) = 455$). Third and fourth panel: Estimation of $L^2(\d \Pb \otimes \d t)$ distance between $u^{\lev, M}$ and the actual optimal control $u^*$ with 95\% confidence interval. }
    \label{fig:performance_diff}
\end{figure}

On the first and third panel of Figure \ref{fig:performance_diff}  we observe that the level of state truncation ($\lev$)  strongly influences the optimization results. If one were to choose a low $\lev$, solving the truncated problem would produce a highly suboptimal cost, no matter how complex the control model (more complexity = higher truncation $M$ for the control). In fact, for a given $\lev$, the cost value appears to decrease initially and then increase after around $M= \lev-1$. This is due to the fact that in Equation \eqref{eq:tensor_equation_3}, the coefficients of $\ctt$ at a level $m$ appear only in the coefficients of $\stt^{\ctt}$ at levels $m+1$ or higher. As the coefficients of $\ctt$ at higher levels will not affect the coefficients of $\stt^\ctt$ at lower levels, the procedure can be considered as an overfit, i.e. optimising a complex control model with less accurate target function. This is also reflected in our main result (Theorem \ref{theorem: density result}) that we have an ordered limit where $\lev$ should be significantly larger than $M$. 

On the second and fourth panel of Figure \ref{fig:performance_diff}   we observe that for each level of control truncation ($M$), the cost value decreases as we increase the accuracy of the cost functional (more accuracy corresponds to higher level of state truncation $\lev$). The cost values seem to plateau to quantities above the actual optimal cost, but said quantities seem to converge to the optimal cost as the complexity of the controls ($M$) increases.

\subsection{Fractional Brownian motion ($H=1/4$)}

In this case, we set $\st_0=b_0=\sigma_0=C(t)=D(t)=E=G=0$, and the remaining model parameters of the state dynamics and cost functional to be one. The optimal control thus seeks to minimize the expected average squared control effort and distance from the origin of a geometric fractional Brownian motion whose drift is additively controlled.

First, we implement Monte-Carlo simulation ($10,000$ sample path with $1,000$ time partitions) to approximate the expected fractional Brownian signature $\E [\Wsig^H_t]$. Next, we apply the first-order condition to minimise $\langle \mfq^\lev(\cdot), \hat{\E} [\Wsig_T^H]\rangle$ as in the Brownian case to obtain the (approximate) minimizer $\ctt^{\lev, M}$. Afterwards, we simulate $\ct^{\lev, M} = \langle \ctt^{\lev, M}, \Wsig^H\rangle$ a total of  $10,000$ times by partitioning the time horizon into $1,000$ equally-spaced intervals. As benchmark, we compute the optimal control from Corollary 3.4 in \cite{duncan2013linear} and simulate the performance using  the same randomness as in the signature approach. Note that this benchmark is also an approximate to the actual optimal control as we have to approximate several fractional integrals. The performance of the strategies is reported in Figure \ref{fig:performance_diff_fbm}.

\begin{figure}[H]
    \centering
    \includegraphics[width=0.49\linewidth]{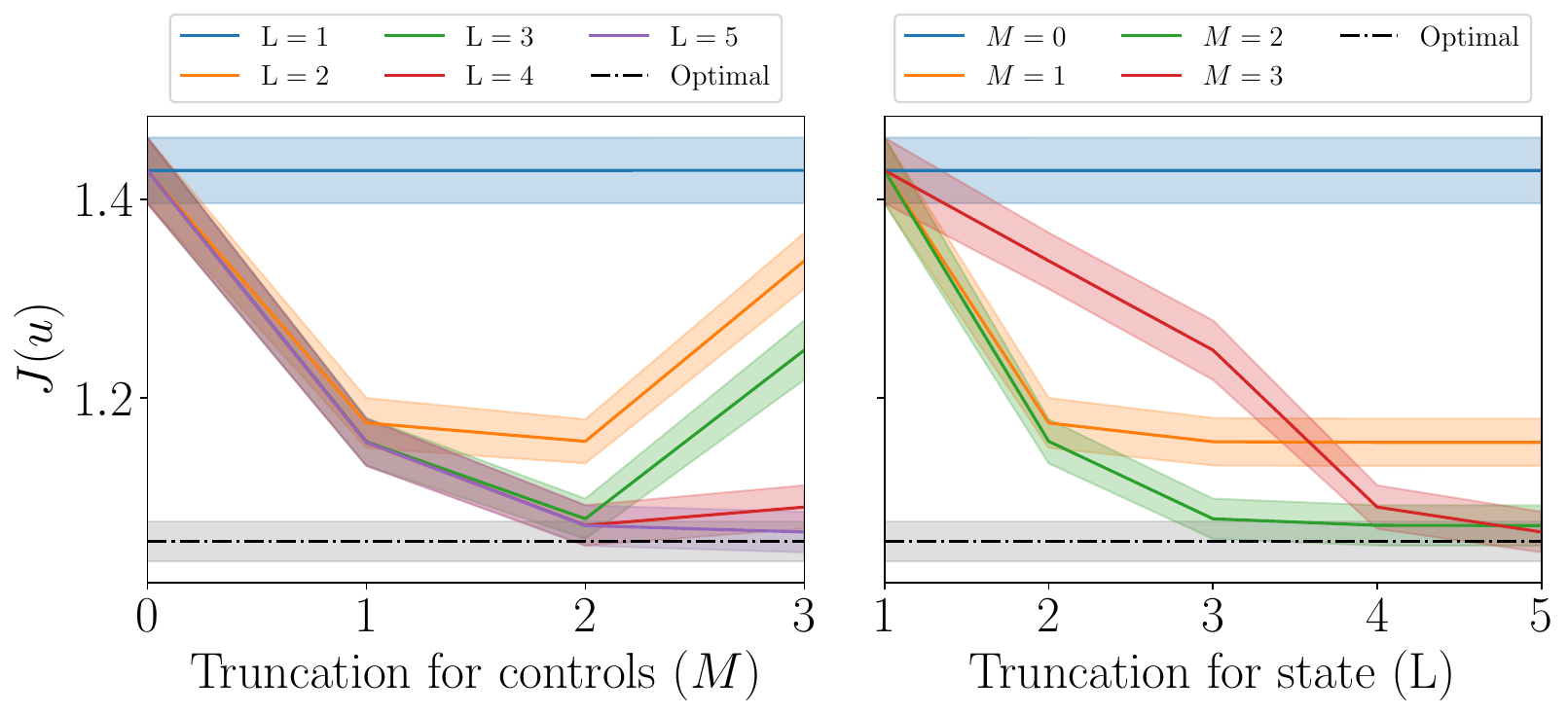}
    \includegraphics[width=0.49\linewidth]{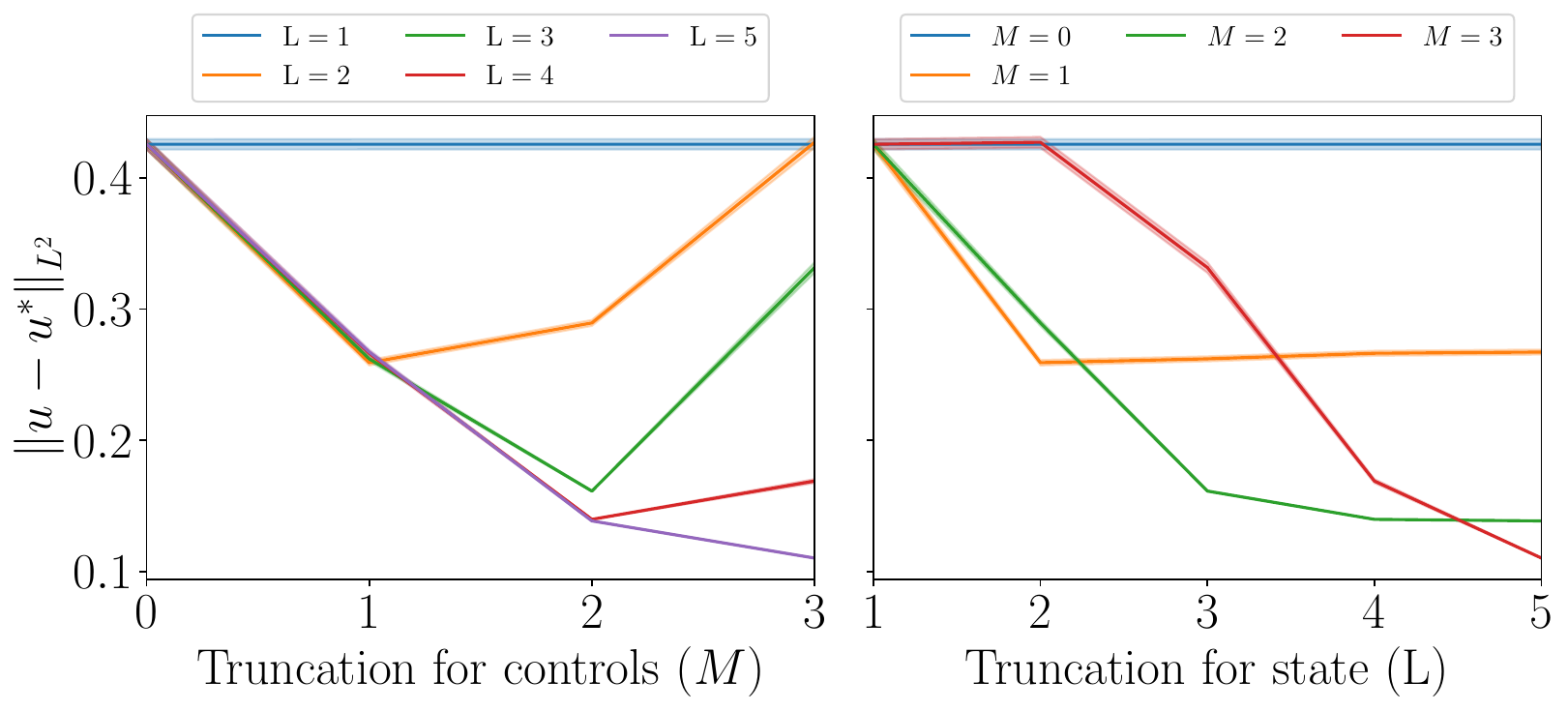}
    \caption{First and second panel: Estimation of $J(\ct)$ with 95\% confidence interval ($J(u^*) \approx 1.056$). Third and fourth panel: Estimation of $L^2(\d \Pb \otimes \d t)$ distance between $u^{\lev, M}$ and the benchmark $u^*$ with 95\% confidence interval. }
    \label{fig:performance_diff_fbm}
\end{figure}

The above results show that we still obtain similar convergence trend as in the Brownian case. Showing that although the theoretical convergence guarantees are yet to be established, the numerical experiments produce encouraging results.

\bibliographystyle{siamplain}
\bibliography{references}

\end{document}

%% file: ex_shared.tex
\usepackage{lipsum}
\usepackage{amsmath, amsfonts, amssymb}
\usepackage{graphicx}
\usepackage{epstopdf}
\usepackage{mathtools}
\usepackage{autonum}
\usepackage{algorithmic}
\usepackage{multirow}
\usepackage{doi}
\ifpdf
  \DeclareGraphicsExtensions{.eps,.pdf,.png,.jpg}
\else
  \DeclareGraphicsExtensions{.eps}
\fi

\usepackage{tikz-cd}
\usepackage{hyperref}
\hypersetup{
	colorlinks = true, 
	urlcolor = blue, 
	linkcolor = red, 
	citecolor = blue 
}

\newsiamremark{remark}{Remark}
\newsiamremark{hypothesis}{Hypothesis}
\crefname{hypothesis}{Hypothesis}{Hypotheses}
\newsiamthm{claim}{Claim}
\newsiamthm{assumption}{Assumption}
\newsiamremark{fact}{Fact}
\newsiamremark{illustration}{Illustration}
\crefname{fact}{Fact}{Facts}

\DeclareMathSymbol{\shortminus}{\mathbin}{AMSa}{"39}
\usepackage{shuffle}
\def\shpr{\shuffle}

\newcommand{\la}{\left \langle}
\newcommand{\ra}{\right\rangle}

\newcommand{\N}{\mathbb{N}}
\newcommand{\R}{\mathbb{R}}
\newcommand{\E}{\mathbb{E}}
\newcommand{\Pb}{\mathbb{P}}

\newcommand{\lev}{{\mathrm{L}}}
\newcommand{\mfT}{{\mathfrak{T}}}

\newcommand{\mcA}{{\mathcal{A}}}

\newcommand{\mcF}{{\mathcal{F}}}

\newcommand{\mbF}{{\mathbb{F}}}

\newcommand{\Wsig}{{\widehat{\mathbb{W}}}}

\newcommand{\Ysig}{{\widehat{\mathbb{Y}}}}

\newcommand{\wb}{{{\color{blue} \mathbf{w}}}}
\newcommand{\vb}{{\color{blue} \mathbf{v}}}

\newcommand{\Sig}{{\mathrm{Sig}}}

\newcommand{\ext}{{\textup{ext}}}
\newcommand{\tensorLspace}{{\mcT^{\Dbar}}}

\newcommand{\tensorExpspace}{{\mcT^{\exp}}}

\newcommand{\F}{\mathcal{F}}

\newcommand{\mfq}{\mathtt{J}}

\newcommand{\mcW}{\mathcal{W}}

\newcommand{\Proj}{\mathrm{Proj}}

\newcommand{\Nn}{\mathcal{N}}

\newcommand{\emptyword}{{\color{blue} \pmb{\phi}}}
\newcommand{\eone}{{\color{blue} \mathbf{1}}}
\newcommand{\etwo}{{\color{blue} \mathbf{2}}}

\newcommand{\eD}{{\color{blue} {\mathbf{D}}}}
\newcommand{\eDbar}{{\color{blue} \bar{\mathbf{D}}}}
\newcommand{\mcT}{\mathcal{T}}
\newcommand{\jb}{{\color{blue} \mathbf{j}}}

\newcommand{\Dbar}{{\bar{D}}}
\newcommand{\alphabet}{{\color{blue} \mathbf{a}}}

\renewcommand{\d}{\mathrm{d}}

\newcommand{\ct}{u} 
\newcommand{\ctt}{\mathtt{\ct}}
\newcommand{\st}{X} 
\newcommand{\stt}{\mathtt{\st}}

\usepackage{amsopn}

\usepackage{enumitem}


%% file: references.bib
@book{carmona2016lectures,
  title={Lectures on BSDEs, stochastic control, and stochastic differential games with financial applications},
  author={Carmona, Ren{\'e}},
  year={2016},
  publisher={SIAM}
}

@book{yong1999stochastic,
  title={Stochastic controls: Hamiltonian systems and HJB equations},
  author={Yong, Jiongmin and Zhou, Xun Yu},
  volume={43},
  year={1999},
  publisher={Springer Science \& Business Media}
}

@inproceedings{arribas2020sigsde,
    author = {Arribas, Imanol Perez and Salvi, Cristopher and Szpruch, Lukasz},
    title = {Sig-SDEs model for quantitative finance},
    year = {2021},
    isbn = {9781450375849},
    publisher = {Association for Computing Machinery},
    address = {New York, NY, USA},
    booktitle = {Proceedings of the First ACM International Conference on AI in Finance},
    articleno = {7},
    numpages = {8},
    location = {New York, New York},
    series = {ICAIF '20}
}

@article{sirignano2018dgm,
  title={DGM: A deep learning algorithm for solving partial differential equations},
  author={Sirignano, Justin and Spiliopoulos, Konstantinos},
  journal={Journal of computational physics},
  volume={375},
  pages={1339--1364},
  year={2018},
  publisher={Elsevier}
}

@inproceedings{hoglund2023neuralrde,
  title={A neural RDE approach for continuous-time non-Markovian stochastic control problems},
  author={H{\"o}glund, Melker and Ferrucci, Emilio and Hern{\'a}ndez, Camilo and Gonzalez, Aitor Muguruza and Salvi, Cristopher and S{\'a}nchez-Betancourt, Leandro and Zhang, Yufei},
  year={2023},
  booktitle={ICML Workshop on New Frontiers in Learning, Control, and Dynamical Systems}
}

@article{jaber2024pathdependent,
  title={Path-dependent processes from signatures},
  author={Abi Jaber, Eduardo  and G{\'e}rard, Louis-Amand and Huang, Yuxing},
  journal={arXiv preprint arXiv:2407.04956},
  year={2024}
}

@article{lyons2020non,
  title={Non-parametric pricing and hedging of exotic derivatives},
  author={Lyons, Terry and Nejad, Sina and Perez Arribas, Imanol},
  journal={Applied Mathematical Finance},
  volume={27},
  number={6},
  pages={457--494},
  year={2020},
  publisher={Taylor \& Francis}
}

@article{iisignature,
    title={Algorithm 1004: The iisignature Library:
    Efficient Calculation of Iterated-Integral Signatures and Log Signatures},
    author={Jeremy Reizenstein and Benjamin Graham}, journal={ACM Transactions on Mathematical Software (TOMS)}, year={2020}
}

@phdthesis{ni2012expected,
    title={The expected signature of a stochastic process},
    author={Ni, H.},
    year={2012},
    school={University of Oxford}
}

@book{bensoussan2018estimation,
  title={Estimation and Control of Dynamical Systems},
  author={Alain Bensoussan},
  year={2018},
  publisher={Springer Cham}
}

@article{jaber2024signature,
  title={Signature volatility models: pricing and hedging with {F}ourier},
  author={Abi Jaber, Eduardo and G{\'e}rard, Louis-Amand},
  journal={SIAM Journal on Financial Mathematics},
  volume={16},
  number={2},
  pages={606--642},
  year={2025},
  publisher={SIAM}
}

@article{arribas2020optimal,
  title={Optimal execution with rough path signatures},
  author={Kalsi, Jasdeep and Lyons, Terry and Arribas, Imanol Perez},
  journal={SIAM Journal on Financial Mathematics},
  volume={11},
  number={2},
  pages={470--493},
  year={2020},
  publisher={SIAM}
}

@article{ceylan2025universality,
  title={Global universal approximation with Brownian signatures},
  author={Ceylan, Mihriban and Pr{\"o}mel, David J},
  journal={arXiv preprint arXiv:2512.16396},
  year={2025}
}

@InProceedings{platen1980approx,
    author="Platen, E. and Wagner, W.",
    editor="Grigelionis, Bronius",
    title="Approximation of {I}t{\^o} integral equations",
    booktitle="Stochastic Differential Systems Filtering and Control",
    year="1980",
    publisher="Springer Berlin Heidelberg",
    address="Berlin, Heidelberg",
    pages="172--176"
}

@article{lyons2004cubature,
 author = {Terry Lyons and Nicolas Victoir},
 journal = {Proceedings: Mathematical, Physical and Engineering Sciences},
 number = {2041},
 pages = {169--198},
 publisher = {The Royal Society},
 title = {Cubature on Wiener Space},
 volume = {460},
 year = {2004}
}

@article{chevyrev2022signature,
  title={Signature moments to characterize laws of stochastic processes},
  author={Chevyrev, Ilya and Oberhauser, Harald},
  journal={Journal of Machine Learning Research},
  volume={23},
  number={176},
  pages={1--42},
  year={2022}
}

@article{cuchiero2025global,
  title={Global universal approximation of functional input maps on weighted spaces},
  author={Cuchiero, Christa and Schmocker, Philipp and Teichmann, Josef},
  journal={Constructive Approximation},
  pages={1--76},
  year={2026},
  publisher={Springer}
}

@article{platen1982taylor,
    author="Platen, E. and Wagner, W.",
    title="On a Taylor formula for a class of {I}t\^o processes",
    journal="Probability and Mathematical Statistics",
    year="1982",
    volume="3",
    pages="37--51"
}

@phdthesis{fawcett2003problems,
  title={Problems in Stochastic Analysis: Connections Between Rough Paths and Non-commutative Harmonic Analysis},
  author={Fawcett, T.},
  year={2003},
  school={University of Oxford}
}

@Inbook{kloeden1992stochastic,
    author="Kloeden, Peter E.
    and Platen, Eckhard",
    title="Stochastic Taylor Expansions",
    bookTitle="Numerical Solution of Stochastic Differential Equations",
    year="1992",
    publisher="Springer Berlin Heidelberg",
    address="Berlin, Heidelberg",
    pages="161--226"
}

@article{chen1954iterated,
    title = {Iterated Integrals and Exponential Homomorphisms},
    author = {Kuo-Tsai Chen},
    journal = {Proceedings of the London Mathematical Society},
    volume = {s3-4},
    number = {1},
    pages = {502-512},
    year = {1954}
}

@article{boedihardjo2016uniqueness,
title = {The signature of a rough path: Uniqueness},
journal = {Advances in Mathematics},
volume = {293},
pages = {720-737},
year = {2016},
author = {Horatio Boedihardjo and Xi Geng and Terry Lyons and Danyu Yang}
}

@article{hambly2010uniqueness,
  title={Uniqueness for the signature of a path of bounded variation and the reduced path group},
  author={Hambly, Ben and Lyons, Terry},
  journal={Annals of Mathematics},
  pages={109--167},
  year={2010},
  publisher={JSTOR}
}

@article{bank2025stochastic,
  title={Stochastic control with signatures},
  author={Bank, Peter and Bayer, Christian and Hager, Paul P and Riedel, Sebastian and Nauen, Tobias},
  journal={SIAM Journal on Control and Optimization},
  volume={63},
  number={5},
  pages={3189--3218},
  year={2025},
  publisher={SIAM}
}

@article{futter2025kernel,
  title={Kernel Learning for Mean-Variance Trading Strategies},
  author={Futter, Owen and Cirone, Nicola Mu{\c{c}}a and Horvath, Blanka},
  journal={arXiv preprint arXiv:2507.10701},
  year={2025}
}

@article{bayer2025primal,
  title={Primal and dual optimal stopping with signatures},
  author={Bayer, Christian and Pelizzari, Luca and Schoenmakers, John},
  journal={Finance and Stochastics},
  pages={1--34},
  year={2025},
  publisher={Springer}
}

@article{bayer2023optimal,
  title={Optimal stopping with signatures},
  author={Bayer, Christian and Hager, Paul P and Riedel, Sebastian and Schoenmakers, John},
  journal={The Annals of Applied Probability},
  volume={33},
  number={1},
  pages={238--273},
  year={2023},
  publisher={Institute of Mathematical Statistics}
}

@article{jaber2025hedging,
  title={Hedging with memory: shallow and deep learning with signatures},
  author={Abi Jaber, Eduardo  and G{\'e}rard, Louis-Amand},
  journal={arXiv preprint arXiv:2508.02759},
  year={2025}
}

@article{futter2023signature,
  title={Signature trading: A path-dependent extension of the mean-variance framework with exogenous signals},
  author={Futter, Owen and Horvath, Blanka and Wiese, Magnus},
  journal={arXiv preprint arXiv:2308.15135},
  year={2023}
}

@article{jaber2025signatureapproach,
  title={Signature approach for pricing and hedging path-dependent options with frictions},
  author={Abi Jaber, Eduardo  and Hainaut, Donatien and Motte, Edouard},
  journal={arXiv preprint arXiv:2511.23295},
  year={2025}
}

@book{lyons2007differential, 
  title={Differential equations driven by rough paths},
  author={Lyons, Terry and Caruana, Michael and L{\'e}vy, Thierry},
  year={2007},
  publisher={Springer}
}

@article{cartea2022double,
  title={Double-execution strategies using path signatures},
  author={Cartea, {\'A}lvaro and Arribas, Imanol P{\'e}rez and S{\'a}nchez-Betancourt, Leandro},
  journal={SIAM Journal on Financial Mathematics},
  volume={13},
  number={4},
  pages={1379--1417},
  year={2022},
  publisher={SIAM}
}

@article{adams2019differint,
      title={differint: A Python Package for Numerical Fractional Calculus}, 
      author={Matthew Adams},
      year={2019},
      journal={arXiv preprint arXiv:1912.05303}
}

@article{duncan2013linear,
    author = {Duncan, Tyrone E. and Pasik-Duncan, Bozenna},
    title = {Linear-Quadratic Fractional Gaussian Control},
    journal = {SIAM Journal on Control and Optimization},
    volume = {51},
    number = {6},
    pages = {4504-4519},
    year = {2013}
}

@article{duncan2017stochastic,
    title = {Stochastic Linear-Quadratic Control with State Dependent Fractional Brownian Noise and Stochastic Coefficients},
    journal = {IFAC-PapersOnLine},
    volume = {50},
    number = {2},
    pages = {199-202},
    year = {2017},
    note = {Control Conference Africa CCA 2017},
    author = {Duncan, Tyrone E. and Pasik-Duncan, Bozenna}
}

@article{kleptsyna2003lqregulator,
    author = {Kleptsyna, M. L. and Breton, Alain Le and Viot, M.},
    title = {About the linear-quadratic regulator problem under a fractional brownian perturbation},
    journal = {ESAIM: Probability and Statistics},
    pages = {161--170},
    year = {2003},
    publisher = {EDP-Sciences},
    volume = {7}
}

@Inbook{chevyrev2026aprimer,
    author="Chevyrev, Ilya
    and Kormilitzin, Andrey",
    title="A Primer on the Signature Method in Machine Learning",
    bookTitle="Signature Methods in Finance: An Introduction with Computational Applications",
    year="2026",
    publisher="Springer Nature Switzerland",
    address="Cham",
    pages="3--64"
}

@article{gaines1994thealgebra,
    author = {J. G. Gaines},
    title = {The algebra of iterated stochastic integrals},
    journal = {Stochastics and Stochastic Reports},
    volume = {49},
    number = {3-4},
    pages = {169--179},
    year = {1994},
    publisher = {Taylor \& Francis}
}

@article{cass2024lecturenotesroughpaths,
      title={Lecture notes on rough paths and applications to machine learning}, 
      author={Thomas Cass and Cristopher Salvi},
      year={2024},
      journal={arXiv preprint arXiv:2404.06583}
}

@article{chevyrev2026orthogonal,
      title={Orthogonal polynomials on path-space}, 
      author={Ilya Chevyrev and Emilio Ferrucci and Darrick Lee and Terry Lyons and Harald Oberhauser and Nikolas Tapia},
      year={2026},
      journal={arXiv preprint arXiv:2602.18808} 
}

@book{pham2009continuous,
  title={Continuous-time stochastic control and optimization with financial applications},
  author={Pham, Huy{\^e}n},
  volume={61},
  year={2009},
  publisher={Springer Science \& Business Media}
}
